\newcommand{\eps}{\varepsilon}
\newcommand{\R}{\mathbb R}
\newcommand{\N}{\mathbb N}
\newcommand{\then}{\Longrightarrow}
\newcommand{\J}{{\cal J}}
\DeclareMathOperator*{\esssup}{ess\; sup}
\DeclareMathOperator*{\essinf}{ess\; inf}
\newcommand\meas{{\rm meas}}
\newcommand\e{{\rm e}}
\newtheorem{corollary}{Corollary}[section]
\newtheorem{theorem}[corollary]{Theorem}
\newtheorem{lemma}[corollary]{Lemma}
\newtheorem{proposition}[corollary]{Proposition}
\theoremstyle{definition}
\newtheorem{definition}[corollary]{Definition}
\newtheorem{remark}[corollary]{Remark}
\numberwithin{equation}{section}
\begin{document}

\title{{\bf Existence of radial bounded solutions\\
 for some quasilinear elliptic equations in $\R^N$} 
\footnote{The research that led to the present paper was partially supported 
by MIUR--PRIN project 2017JPCAPN ``Qualitative and quantitative aspects of nonlinear PDEs'',
{\sl Fondi di Ricerca di Ateneo} 2015/16 ``Problemi differenziali non lineari''  
and Research Funds INdAM -- GNAMPA Project
2018 ``Problemi ellittici semilineari: alcune idee variazionali''}}

\author{Anna Maria Candela and Addolorata Salvatore\\
{\small Dipartimento di Matematica}\\
{\small Universit\`a degli Studi di Bari Aldo Moro} \\
{\small Via E. Orabona 4, 70125 Bari, Italy}\\
{\small \it annamaria.candela@uniba.it}\\
{\small \it addolorata.salvatore@uniba.it}}
\date{}

\maketitle

\begin{abstract}
We study the quasilinear equation 
\[(P)\qquad
- {\rm div} (A(x,u) |\nabla u|^{p-2} \nabla u) + \frac1p\ A_t(x,u)
 |\nabla u|^p + |u|^{p-2}u\  =\  g(x,u) \qquad \hbox{in $\R^N$,}
\]
with $N\ge 3$, $p > 1$,  where $A(x,t)$, $A_t(x,t) = \frac{\partial A}{\partial t}(x,t)$ and
$g(x,t)$ are Carath\'eodory functions on $\R^N \times \R$.

Suitable assumptions on $A(x,t)$ and $g(x,t)$ 
set off the variational structure of $(P)$ and its related functional $\J$
is $C^1$ on the Banach space $X = W^{1,p}(\R^N) \cap L^\infty(\R^N)$.
In order to overcome the lack of compactness, we assume
that the problem has radial symmetry, then we look for 
critical points of $\J$ restricted to $X_r$, subspace of the radial functions in $X$.

Following an approach which exploits the interaction between
$\|\cdot\|_X$ and the norm on $W^{1,p}(\R^N)$, we prove the existence 
of at least one weak bounded radial solution of $(P)$ by applying a generalized version
of the Ambrosetti--Rabinowitz Mountain Pass Theorem.
\end{abstract}

\noindent
{\it \footnotesize 2010 Mathematics Subject Classification}. {\scriptsize 35J20, 35J92, 35Q55, 58E05}.\\
{\it \footnotesize Key words}. {\scriptsize Quasilinear elliptic equation, modified Schr\"odinger equation,
bounded radial solution, weak Cerami--Palais--Smale condition, Ambrosetti--Rabinowitz condition}.


\section{Introduction} \label{secintroduction}

In this paper we investigate the existence of weak bounded radial solutions of the quasilinear equation 
\begin{equation}\label{euler}
- {\rm div} (A(x,u) |\nabla u|^{p-2} \nabla u) + \frac1p\ A_t(x,u)
 |\nabla u|^p + |u|^{p-2}u\  =\  g(x,u) \qquad \hbox{in $\R^N$,}
\end{equation}
with $N\ge 3$, $p > 1$,  where $A(x,t)$, $g(x,t)$ are
given real functions on $\R^N \times \R$ and $A_t(x,t) = \frac{\partial}{\partial t} A(x,t)$.

Equation \eqref{euler}, with $p =2$, is
related to the research of standing waves for the ``modified Schr\"odinger
equations'' and appears quite naturally in Mathematical Physics,
derived as model of several physical phenomena  
in plasma physics, fluidmechanics, theory of
Heisenberg ferromagnets and magnons, dissipative quantum
mechanics and condensed matter theory (for more details, see, e.g., \cite{LWW} and references therein).

In the mathematical literature,
very few results are known about equation \eqref{euler} if $A_t(x,t) \not\equiv 0$ 
since, in general, a classical variational approach fails.  
In fact, the ``natural'' functional associated to \eqref{euler} is
\[
\J(u)\ =\  \frac1p\ \int_{\R^N} A(x,u)|\nabla u|^p dx + \frac1p\ \int_{\R^N} |u|^p dx - \int_{\R^N} G(x,u) dx,
\]
which is not defined in $W^{1,p}(\R^N)$ for a general coefficient $A(x,t)$
in the principal part.  
Moreover, even if $A(x,t)$ is a smooth strictly positive bounded function,
so the functional $\J$ is well defined in $W^{1,p}(\R^N)$,
if $A_t(x,t) \not\equiv 0$ it is G\^ateaux differentiable only along directions of 
$W^{1,p}(\R^N) \cap L^\infty(\R^N)$. 

In the past, such a problem has been overcome by introducing suitable definitions
of critical point for non--differentiable functionals (see,
e.g., \cite{AB1,CD,CDM}). In particular, existence results have been obtained if equation \eqref{euler}
is given on a bounded domain with homogeneous Dirichlet boundary conditions
(see, e.g., \cite{AB1,Ca} and also \cite{BP} and references therein),
while some existence results in unbounded domain have been stated, e.g.,  
in \cite{AG}.

In the whole Euclidean space $\R^N$ other existence results have been proved 
by means of constrained minimization arguments (see \cite{LW,PSW})
or by using a suitable change of variable (see, e.g., \cite{CJ,LWW}). We note that this last method 
works only if $A(x,t)$ has a very special form, in particular it is independent of $x$. 

More recently, if $\Omega$ is a bounded subset of $\R^N$,
a different approach has been developed which
exploits the interaction between two different norms on $W^{1,p}_0(\Omega) \cap L^\infty(\Omega)$
(see \cite{CP1,CP2,CPS1}).

Following this way of thinking, here firstly we prove that, under some quite
natural conditions, functional $\J$ is $C^1$ in the Banach space $X = W^{1,p}(\R^N) \cap L^\infty(\R^N)$
equipped with the intersection norm $\|\cdot\|_X$ (see Proposition \ref{smooth1}).

Then, due to the lack of compactness of our setting,
we assume that the problem has radial symmetry 
so we study the existence of critical points of $\J$ 
restricted to the subspace $X_r$ of the radial functions.

So, by using the interaction between the norm $\|\cdot\|_X$   
and the standard one on $W^{1,p}(\R^N)$, if $G(x,t)$ has a subcritical growth, we state  
that $\J$ satisfies a weaker version of the Cerami's variant of the Palais--Smale condition 
in $X_r$ (see Definition \ref{wCPSdef} and Proposition \ref{wCPS}). 
We note that, in general, $\J$ cannot verify the standard Palais--Smale condition, or its Cerami's
variant, as Palais--Smale sequences may converge in 
the $W^{1,p}(\R^N)$--norm but be unbounded in $L^{\infty}(\R^N)$
(see, e.g., \cite[Example 4.3]{CP2017}). 

Since our main theorem requires a list of hypotheses, we give 
its complete statement in Section \ref{secmain} (see Theorem \ref{mainthm}),
while here, in order to highlight at least a model problem and its 
related result, we consider the particular setting of the ``modified Schr\"odinger equation'' in $\R^3$
with $p=2$ and
\[
A(x,t) \ =\ A_1(x) + A_2(x) |t|^{2 s}, \qquad 
g(x,t)\ =\ |t|^{\mu-2}t,
\]
so that problem \eqref{euler} reduces to
\begin{equation}\label{euler0}
- {\rm div} ((A_1(x) + A_2(x) |u|^{2 s}) \nabla u) + s A_2(x)|u|^{2 s-2} u |\nabla u|^2
+ u\ =\ |u|^{\mu-2}u\qquad\hbox{in $\R^3$,}
\end{equation}
thus generalizing the particular case $A_1(x) \equiv A_2(x) \equiv 1$
and $s=1$ which many papers deal with (see, e.g., \cite{CJ}). Here, we state the following result.

\begin{theorem}\label{mod0}
Let $A_1$, $A_2 \in L^{\infty}(\Omega)$ be two radially symmetric functions such that 
\[
A_1(x) \ge \alpha_0,\qquad A_2(x) \ge 0\qquad \hbox{a.e. in $\R^3$,}
\]
for a constant $\alpha_0 > 0$. If $3 < 2(1+s) < \mu< 6$,
then problem \eqref{euler0} has at least one weak bounded radial solution.
\end{theorem}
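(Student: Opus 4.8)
The plan is to derive Theorem~\ref{mod0} as a special case of the general result, Theorem~\ref{mainthm}, by verifying that the model choice
\[
A(x,t) \ =\ A_1(x) + A_2(x)|t|^{2s}, \qquad g(x,t)\ =\ |t|^{\mu-2}t
\]
satisfies all the structural hypotheses listed there. First I would check the conditions on the principal coefficient: since $A_1,A_2\in L^\infty(\R^3)$ are radial, $A(x,t)$ and $A_t(x,t) = 2s\,A_2(x)|t|^{2s-2}t$ are Carath\'eodory functions, radially symmetric in $x$, with $A(x,t)\ge\alpha_0>0$ uniformly, and $A(\cdot,t)$ together with $A_t(\cdot,t)$ are bounded on bounded $t$-intervals with the right polynomial growth in $t$ (degree $2s$ and $2s-1$ respectively). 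The sign condition $A_2(x)\ge 0$ should give the one-signedness / monotonicity-type requirement on $A_t$ (namely $t\,A_t(x,t)\ge 0$), which is the hypothesis that makes the quadratic term in \eqref{euler} play favorably in the energy estimates. I would also confirm the compatibility condition relating $A$ and $A_t$ that is used to get the $(wCPS)$ condition — here it amounts to an inequality like $p\,A(x,t) - t\,A_t(x,t)\ge \nu\, A(x,t)$ for some $\nu>0$, which for $p=2$ reads $2(A_1+A_2|t|^{2s}) - 2sA_2|t|^{2s}\ge \nu(A_1+A_2|t|^{2s})$, valid with $\nu = \min\{2, 2-s\}$ provided $s<2$; since the hypothesis $2(1+s)<6$ forces $s<2$, this is exactly where that restriction enters.

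Next I would verify the hypotheses on $g$. The power nonlinearity $g(x,t)=|t|^{\mu-2}t$ is autonomous hence trivially radial, it is subcritical precisely because $\mu<6=2^*$ in dimension $N=3$, and its primitive is $G(x,t)=\frac1\mu|t|^\mu$. The Ambrosetti--Rabinowitz condition $0<\theta\,G(x,t)\le t\,g(x,t)$ holds with $\theta=\mu$, and the requirement $\theta$ be strictly larger than the "effective" exponent coming from the principal part — which should be $p(1+s)/1 = 2(1+s)$ here, the homogeneity degree of the term $\int A(x,u)|\nabla u|^p$ under the natural scaling — is guaranteed by the left inequality $2(1+s)<\mu$ in the statement. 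Finally $g(x,t)=o(|t|^{p-1})$ as $t\to0$ (indeed $\mu>2(1+s)>2=p$), giving the local behavior at the origin needed for the mountain pass geometry, and the superlinearity plus AR condition give the geometry far from the origin and the boundedness of Cerami sequences at the $W^{1,p}$ level.

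With all hypotheses of Theorem~\ref{mainthm} checked, the conclusion — existence of at least one weak bounded radial solution of \eqref{euler0} — follows immediately by invoking that theorem. The main obstacle in this reduction is bookkeeping: matching the exact form of each abstract hypothesis (there is a list of them, $(A_1)$--type conditions on the coefficient, growth and sign conditions, and the AR-type condition on $g$) to the explicit model, and in particular tracking which numerical assumption ($3<2(1+s)$, $2(1+s)<\mu$, $\mu<6$) is responsible for which hypothesis; the two genuinely substantive points are (i) that $s<2$ is forced and is what makes the $A$-versus-$A_t$ compatibility inequality hold, and (ii) that $\mu>2(1+s)$ is the correct threshold for the Ambrosetti--Rabinowitz exponent in the presence of the $|u|^{2s}$-weighted gradient term rather than the naive $\mu>p=2$. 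Once these are identified the proof is a direct citation of the general theorem, so I would keep it short.
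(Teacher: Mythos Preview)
Your overall strategy --- verify that the model data satisfy the hypotheses $(H_0)$--$(H_5)$, $(G_0)$--$(G_4)$ of Theorem~\ref{mainthm} and then invoke that theorem --- is exactly what the paper does (see Remark~\ref{part1} and the remark immediately following Theorem~\ref{mainthm}). However, your bookkeeping of which numerical inequality is responsible for which hypothesis is off in two places, and this is not just cosmetic.

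First, the ``compatibility condition'' you write, $p\,A(x,t)-t\,A_t(x,t)\ge\nu\,A(x,t)$, is not one of the paper's hypotheses. The actual condition $(H_4)$ reads $(\mu-p)\,A(x,t)-t\,A_t(x,t)\ge\alpha_2\,A(x,t)$; for the model with $p=2$ this becomes $(\mu-2)A_1+(\mu-2-2s)A_2|t|^{2s}\ge\alpha_2\,A$, which holds (with $\alpha_2=\mu-2-2s>0$) precisely when $\mu>2(1+s)$. So the inequality $2(1+s)<\mu$ is what makes $(H_4)$ work, not merely what calibrates the Ambrosetti--Rabinowitz exponent. Your version would instead force $s<1$ (and even then, your computation of $\nu$ drops a factor of $2$). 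The companion hypothesis $(H_3)$, namely $p\,A+t\,A_t\ge\alpha_1 A$, is automatically satisfied here because $t\,A_t=2sA_2|t|^{2s}\ge0$.

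Second, you attribute the constraint $s<2$ (coming from $2(1+s)<6$) to this compatibility inequality, but in fact it plays no independent role: it is simply a consequence of $2(1+s)<\mu<6$. What you have missed is the role of the \emph{left} inequality $3<2(1+s)$, i.e.\ $2s>1$: this is exactly the condition $ps>1$ needed so that $t\mapsto A(x,t)$ is $C^1$ (otherwise $A_t(x,t)=2sA_2(x)|t|^{2s-2}t$ is singular at $t=0$), which is part of $(H_0)$. Once these attributions are corrected, your reduction goes through and matches the paper's argument line for line.
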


We note that the same hypotheses of Theorem \ref{mod0} appear in \cite{BP},
where it is stated the existence of a bounded positive solution of equation \eqref{euler0} in a 
bounded domain. However, in such a paper, by using
a different approach, namely a sequence of truncated functionals,
the authors are able also to study the case $0< 2s \le 1$.
\smallskip

This paper is organized as follows.
In Section \ref{secabstract} we introduce the weak Cerami--Palais--Smale condition
and a generalized version of the Mountain Pass Theorem in an abstract setting.
On the contrary, in Section \ref{variational}, 
we give the first hypotheses on functions $A(x,t)$, $g(x,t)$, 
and the variational formulation of our problem when no radial assumption is involved. 
Then, in Section \ref{secmain} the main result is stated 
and the radial symmetric setting is pointed out.
At last, in Section \ref{secproof} the main theorem is proved.


\section{Abstract tools} \label{secabstract}

In this section, we denote
by $(X, \|\cdot\|_X)$ a Banach space with dual space $(X',\|\cdot\|_{X'})$,
by $(W,\|\cdot\|_W)$ another Banach space such that
$X \hookrightarrow W$ continuously,
and by $J: X \to \R$ a given $C^1$ functional.

Taking $\beta \in \R$, we say that a sequence
$(u_n)_n\subset X$ is a {\sl Cerami--Palais--Smale sequence at level $\beta$},
briefly {\sl $(CPS)_\beta$--sequence}, if
\[
\lim_{n \to +\infty}J(u_n) = \beta\quad\mbox{and}\quad 
\lim_{n \to +\infty}\|dJ(u_n)\|_{X'} (1 + \|u_n\|_X) = 0.
\]
Moreover, $\beta$ is a {\sl Cerami--Palais--Smale level}, briefly {\sl $(CPS)$--level}, 
if there exists a $(CPS)_\beta$--sequence.

The functional $J$ satisfies the classical Cerami--Palais--Smale condition in $X$ 
at level $\beta$ if every $(CPS)_\beta$--sequence converges in $X$
up to subsequences. Anyway, thinking about the setting of our problem,
in general $(CPS)_\beta$--sequences may also exist which are unbounded in $\|\cdot\|_X$
but converge with respect to $\|\cdot\|_W$. Then, we can weaken the classical Cerami--Palais--Smale 
condition in the following way.  

\begin{definition} \label{wCPSdef}
The functional $J$ satisfies the
{\slshape weak Cerami--Palais--Smale 
condition at level $\beta$} ($\beta \in \R$), 
briefly {\sl $(wCPS)_\beta$ condition}, if for every $(CPS)_\beta$--sequence $(u_n)_n$,
a point $u \in X$ exists such that 
\begin{description}{}{}
\item[{\sl (i)}] $\displaystyle 
\lim_{n \to+\infty} \|u_n - u\|_W = 0\quad$ (up to subsequences),
\item[{\sl (ii)}] $J(u) = \beta$, $\; dJ(u) = 0$.
\end{description}
If $J$ satisfies the $(wCPS)_\beta$ condition at each level $\beta \in I$, $I$ real interval, 
we say that $J$ satisfies the $(wCPS)$ condition in $I$.
\end{definition} 

Due to the convergence only in the $W$--norm, the $(wCPS)_\beta$
condition implies that the set of critical points of $J$ at level $\beta$
is compact with respect to $\|\cdot\|_W$; anyway,
this weaker ``compactness'' assumption is enough to prove 
a Deformation Lemma and then some 
abstract critical point theorems (see \cite{CP3}).
In particular, the following generalization of the Mountain Pass Theorem
\cite[Theorem 2.1]{AR} can be stated.

\begin{theorem}[Mountain Pass Theorem]
\label{mountainpass}
Let $J\in C^1(X,\R)$ be such that $J(0) = 0$
and the $(wCPS)$ condition holds in $\R$.
Moreover, assume that two constants
$r$, $\varrho > 0$ and a point $e \in X$ exist such that
\[
u \in X, \; \|u\|_W = r\quad \then\quad J(u) \ge \varrho,
\]
\[
\|e\|_W > r\qquad\hbox{and}\qquad J(e) < \varrho.
\]
Then, $J$ has a Mountain Pass critical point $u^* \in X$ such that $J(u^*) \ge \varrho$.
\end{theorem}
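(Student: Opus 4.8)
The plan is to run the standard minimax/deformation proof of the Mountain Pass Theorem, being careful that the ``mountain'' is measured in the weaker norm $\|\cdot\|_W$ while $J$ and $dJ$ live on $X$. First I would introduce the class of admissible paths
\[
\Gamma\ =\ \{\gamma \in C([0,1],X)\ :\ \gamma(0) = 0,\ \gamma(1) = e\},
\]
which is nonempty because the segment $t \mapsto te$ belongs to it, and the associated minimax level
\[
\varrho^*\ =\ \inf_{\gamma \in \Gamma}\ \max_{t \in [0,1]} J(\gamma(t)).
\]
Since $J \in C^1(X,\R)$, for each fixed $\gamma$ the map $t \mapsto J(\gamma(t))$ is continuous on the compact interval $[0,1]$, so the maximum is attained, and $\varrho^* \le \max_{t\in[0,1]} J(te) < +\infty$; thus $\varrho^*$ is a well defined real number.

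The next step is the geometric estimate $\varrho^* \ge \varrho$. Fix $\gamma \in \Gamma$; as $X \hookrightarrow W$ continuously, $\gamma \in C([0,1],W)$, hence $t \mapsto \|\gamma(t)\|_W$ is continuous on $[0,1]$. Since $\|\gamma(0)\|_W = 0 < r < \|e\|_W = \|\gamma(1)\|_W$, the intermediate value theorem yields $t^* \in (0,1)$ with $\|\gamma(t^*)\|_W = r$, and the first hypothesis gives $J(\gamma(t^*)) \ge \varrho$, so $\max_{t\in[0,1]} J(\gamma(t)) \ge \varrho$. Taking the infimum over $\Gamma$ gives $\varrho^* \ge \varrho > 0$; combined with $J(0) = 0 < \varrho$ and $J(e) < \varrho$, this also shows $\varrho^* > \max\{J(0), J(e)\}$.

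Then I would show that $\varrho^*$ is a critical level, arguing by contradiction: suppose $dJ(u) \neq 0$ for every $u \in X$ with $J(u) = \varrho^*$. Since $J$ satisfies the $(wCPS)_{\varrho^*}$ condition, this weak ``compactness'' at level $\varrho^*$ is exactly what is needed to apply the Deformation Lemma available in the present two--norm framework (see \cite{CP3}). Fixing $\bar\eps \in \bigl(0,\ \tfrac12(\varrho^* - \max\{J(0),J(e)\})\bigr)$, which is admissible by the previous step, the lemma provides a continuous map $\eta : X \to X$ that is the identity outside $J^{-1}\bigl((\varrho^* - 2\bar\eps,\ \varrho^* + 2\bar\eps)\bigr)$ and satisfies $J(\eta(u)) \le \varrho^* - \bar\eps$ whenever $J(u) \le \varrho^* + \bar\eps$. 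Choosing $\gamma \in \Gamma$ with $\max_{t\in[0,1]} J(\gamma(t)) \le \varrho^* + \bar\eps$ and setting $\tilde\gamma = \eta \circ \gamma$, the endpoints remain fixed because $J(0), J(e) < \varrho^* - 2\bar\eps$, so $\tilde\gamma \in \Gamma$, while $\max_{t\in[0,1]} J(\tilde\gamma(t)) \le \varrho^* - \bar\eps < \varrho^*$, contradicting the definition of $\varrho^*$. Hence there exists $u^* \in X$ with $dJ(u^*) = 0$ and $J(u^*) = \varrho^* \ge \varrho$, which is the assertion.

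I expect the genuine difficulty to be concentrated entirely in the Deformation Lemma rather than in the minimax scheme above: its construction relies on a locally Lipschitz pseudo--gradient vector field and the associated flow on $X$, but the only compactness one controls for $(CPS)_{\varrho^*}$--sequences is with respect to $\|\cdot\|_W$, so one must verify that the flow is globally defined, that it does not leave the relevant sublevel sets, and that the combination of $(wCPS)_{\varrho^*}$ with the absence of critical points at level $\varrho^*$ rules out $(CPS)$--sequences clustering near that level (the point where a quantitative ``no almost--critical points'' estimate is required). Granting that lemma from \cite{CP3}, the proof is complete.
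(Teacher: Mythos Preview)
Your proposal is correct and matches the paper's approach: both rely entirely on \cite{CP3}. The paper simply cites \cite[Theorem 1.7]{CP3} and observes that $(wCPS)$ implies the weaker hypothesis $(wC)$ used there (every $(CPS)$--level is a critical level), whereas you unpack the standard minimax/deformation scheme and cite \cite{CP3} only for the Deformation Lemma; the substance is the same, and your final paragraph correctly isolates where all the work lies.
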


\begin{proof}
For the proof, see \cite[Theorem 1.7]{CP3} as  
the required assumption $(wC)$, namely any $(CPS)$--level is also a critical level, 
follows from the stronger $(wCPS)$ condition.
\end{proof}


\section{Variational setting and first properties}
\label{variational}

Here and in the following, $|\cdot|$ is the standard norm 
on any Euclidean space as the dimension
of the considered vector is clear and no ambiguity arises.
Furthermore, we denote by:
\begin{itemize}
\item $B_1(0) = \{x \in \R^N: |x| < 1\}$ the open unit ball in $\R^N$; 
\item $\meas(\Omega)$ the usual Lebesgue measure of a measurable set $\Omega$ in $\R^N$;
\item $L^l(\R^N)$ the Lebesgue space with
norm $|u|_l = \left(\int_{\R^N}|u|^l dx\right)^{1/l}$ if $1 \le l < +\infty$;
\item $L^\infty(\R^N)$ the space of Lebesgue--measurable 
and essentially bounded functions $u :\R^N \to \R$ with norm
\[
|u|_{\infty} = \esssup_{\R^N} |u|;
\]
\item $W^{1,p}(\R^N)$ the classical Sobolev space with
norm $\|u\|_{W} = (|\nabla u|_p^p + |u|_p^p)^{\frac1p}$ if $1 \le p < +\infty$;
\item $W_r^{1,p}(\R^N) = \{u \in W^{1,p}(\R^N): u(x) = u(|x|)\}$ 
the subspace of $W^{1,p}(\R^N)$ equipped with 
the same norm $\|\cdot\|_W$ with dual space $(W_r^{1,p}(\R^N))'$.
\end{itemize}

From the Sobolev Imbedding Theorems, for any $l \in [p,p^*]$
with $p^* = \frac{pN}{N-p}$ if $N > p$, or any $l \in [p,+\infty[$ if $p = N$,
the Sobolev space $W^{1,p}(\R^N)$ is continuously imbedded in $L^l(\R^N)$, i.e.,
a constant $\sigma_l > 0$ exists, such that 
\begin{equation}\label{Sob1}
|u|_l\ \le\ \sigma_l \|u\|_W \quad \hbox{for all $u \in W^{1,p}(\R^N)$}
\end{equation}
(see, e.g., \cite[Corollaries 9.10 and 9.11]{Br}). Clearly, it is $\sigma_p = 1$.
On the other hand, if $p > N$ then  $W^{1,p}(\R^N)$ is continuously imbedded in $L^\infty(\R^N)$
(see, e.g., \cite[Theorem 9.12]{Br}).

Thus, we define 
\begin{equation}\label{space}
X := W^{1,p}(\R^N) \cap L^\infty(\R^N),\qquad
\|u\|_X = \|u\|_W + |u|_\infty.
\end{equation}

From now on, we assume $p \le N$ as, otherwise, it is $X = W^{1,p}(\R^N)$ and
the proofs can be simplified.

\begin{lemma}\label{immergo}
For any $l \ge p$ the Banach space $X$ is continuously imbedded in $L^l(\R^N)$, i.e.,
a constant $\sigma_l > 0$ exists such that 
\begin{equation}\label{Sob2}
|u|_l\ \le\ \sigma_l \|u\|_X \quad \hbox{for all $u \in X$.}
\end{equation}
\end{lemma}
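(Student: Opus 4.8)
The plan is to interpolate between the endpoint imbeddings already available. For $l \ge p$ there are two regimes. If $p \le N$ and $p \le l \le p^*$, then \eqref{Sob1} already gives $|u|_l \le \sigma_l \|u\|_W \le \sigma_l \|u\|_X$ since $\|u\|_W \le \|u\|_X$ by the definition of $\|\cdot\|_X$ in \eqref{space}; in the borderline case $p = N$ this covers all $l \in [p, +\infty[$ and there is nothing more to do. So the only remaining case is $N > p$ and $l > p^*$, where one must exploit the $L^\infty$ component of the norm.

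For $l > p^* $ (so in particular $N > p$), the idea is to write $\int_{\R^N} |u|^l\, dx = \int_{\R^N} |u|^{l - p^*} |u|^{p^*}\, dx \le |u|_\infty^{l - p^*} \int_{\R^N} |u|^{p^*}\, dx = |u|_\infty^{l-p^*}\, |u|_{p^*}^{p^*}$. Taking $l$-th roots and using \eqref{Sob1} with exponent $p^*$ gives
\[
|u|_l \ \le\ |u|_\infty^{\frac{l-p^*}{l}}\, |u|_{p^*}^{\frac{p^*}{l}} \ \le\ |u|_\infty^{\frac{l-p^*}{l}}\, \big(\sigma_{p^*} \|u\|_W\big)^{\frac{p^*}{l}}.
\]
Now bound both $|u|_\infty$ and $\|u\|_W$ by $\|u\|_X$ (each is $\le \|u\|_X$ by \eqref{space}), so that $|u|_l \le \sigma_{p^*}^{p^*/l}\, \|u\|_X^{(l-p^*)/l + p^*/l} = \sigma_{p^*}^{p^*/l}\, \|u\|_X$. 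This yields \eqref{Sob2} with $\sigma_l := \sigma_{p^*}^{p^*/l}$ (any finite constant of this shape works), completing the case $l > p^*$.

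I do not expect a genuine obstacle here: the statement is a soft consequence of the standard Sobolev imbedding \eqref{Sob1} plus the trivial $L^\infty$-interpolation inequality $|u|_l^l \le |u|_\infty^{l-q}|u|_q^q$ for $q \le l$. The only point requiring a little care is organizing the case split cleanly — separating $l \in [p, p^*]$ (handled directly by \eqref{Sob1}) from $l > p^*$ (handled by the interpolation trick), and remembering that when $p = N$ the exponent $p^*$ is not defined but \eqref{Sob1} already covers every $l \in [p, +\infty[$, while when $p > N$ we have $X = W^{1,p}(\R^N) \hookrightarrow L^\infty$ and all $L^l$ imbeddings are immediate (though this last case has been excluded by the standing assumption $p \le N$). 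One should also note that the constants $\sigma_l$ produced this way need not be optimal, which is irrelevant for the applications that follow.
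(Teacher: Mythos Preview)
Your proof is correct and follows essentially the same approach as the paper: the same case split, with the supercritical case $l > p^*$ handled by the interpolation $|u|_l^l \le |u|_\infty^{l-q}|u|_q^q$. The only cosmetic difference is that the paper splits at $q = p$ rather than $q = p^*$ (writing $\int |u|^l \le |u|_\infty^{l-p}\|u\|_W^p \le \|u\|_X^l$), which yields the slightly cleaner constant $\sigma_l = 1$.
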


\begin{proof}
If $p = N$ or if $1\le p<N$ and $l\le p^*$ inequality \eqref{Sob2}
follows from \eqref{Sob1} and \eqref{space}. \\
On the other hand, if $1\le p<N$ and $l > p^*$ then, taking any $u \in X$,
again \eqref{space} implies
\[
\int_{\R^N} |u|^l dx\ \le\ |u|_\infty^{l-p} \int_{\R^N} |u|^{p} dx \le 
|u|_\infty^{l-p} \|u\|_W^{p} \le \|u\|_X^{l},
\] 
thus \eqref{Sob2} holds with $\sigma_l =1$.
\end{proof}

From Lemma \ref{immergo} it follows that if
$(u_n)_n \subset X$, $u \in X$ are such that
$u_n \to u$ in $X$, then $u_n \to u$ also in $L^l(\R^N)$
for any $l \ge p$. This result can be weakened as follows. 

\begin{lemma}\label{immergo2}
If $(u_n)_n \subset X$, $u \in X$, $M > 0$ are such that
\begin{equation}\label{succ1}
\|u_n - u\|_W \to 0 \ \quad\hbox{if $n \to+\infty$,}
\end{equation}
\begin{equation}\label{succ2}
|u_n|_\infty \le M\quad \hbox{for all $n \in \N$,}
\end{equation}
then $u_n \to u$ also in $L^l(\R^N)$ for any $l \ge p$. 
\end{lemma}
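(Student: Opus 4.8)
The plan is to interpolate between the $L^p$ convergence (which is free from \eqref{succ1}) and the uniform $L^\infty$ bound \eqref{succ2}, exactly as in the proof of Lemma \ref{immergo}. First I would observe that \eqref{succ1} gives $u_n \to u$ in $L^p(\R^N)$, hence (up to a subsequence, but the final conclusion will not depend on the choice of subsequence) $u_n \to u$ a.e.\ in $\R^N$; combining the a.e.\ convergence with \eqref{succ2} and Fatou's Lemma yields $|u|_\infty \le M$ as well, so that $|u_n - u|_\infty \le 2M$ for all $n$. For $l = p$ there is nothing to prove, so fix $l > p$.

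The key computation is the pointwise estimate
\[
|u_n - u|^l \ =\ |u_n - u|^{l-p}\,|u_n - u|^p \ \le\ (2M)^{l-p}\,|u_n - u|^p \qquad \hbox{a.e.\ in $\R^N$,}
\]
which integrated over $\R^N$ gives
\[
|u_n - u|_l^l \ \le\ (2M)^{l-p}\,|u_n - u|_p^p \ \le\ (2M)^{l-p}\,\|u_n - u\|_W^p .
\]
Letting $n \to +\infty$ and using \eqref{succ1}, the right-hand side tends to $0$, hence $|u_n - u|_l \to 0$, i.e.\ $u_n \to u$ in $L^l(\R^N)$. Since every subsequence of $(u_n)_n$ again satisfies \eqref{succ1}--\eqref{succ2}, the full sequence converges, so no subsequence extraction is needed in the statement.

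The argument is essentially routine; the only point that needs a little care is justifying $|u|_\infty \le M$, which is what lets us bound $|u_n - u|_\infty$ uniformly and thus control the $L^{l-p}$ factor. This is handled by the a.e.\ convergence furnished by \eqref{succ1} together with Fatou (or, equivalently, by lower semicontinuity of the $L^\infty$ norm under a.e.\ convergence). With that in hand the interpolation inequality closes the proof immediately, and in fact one gets the explicit rate $|u_n-u|_l \le (2M)^{(l-p)/l}\,\|u_n-u\|_W^{p/l}$.
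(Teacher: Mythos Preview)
Your proof is correct and follows essentially the same interpolation idea as the paper: bound $|u_n-u|^l$ pointwise by $|u_n-u|_\infty^{l-p}\,|u_n-u|^p$ and use \eqref{succ1}. Note, however, that the detour through a.e.\ convergence and Fatou to obtain $|u|_\infty\le M$ is unnecessary, since $u\in X$ is part of the hypothesis and hence $|u|_\infty<\infty$ is already given; the paper simply uses $|u_n-u|_\infty\le M+|u|_\infty$ (and also observes that for $p\le l\le p^*$ the result follows directly from the Sobolev inequality \eqref{Sob1}, so the interpolation is only needed for $l>p^*$).
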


\begin{proof}
Let $1 \le p < N$ and $l > p^*$ (otherwise, it is a direct consequence of \eqref{Sob1}). 
Then, from \eqref{Sob1} we have that
\[
\int_{\R^N} |u_n - u|^l dx\ \le\ |u_n - u|_\infty^{l-p} \int_{\R^N} |u_n - u|^{p} dx\ \le\ 
(M + |u|_\infty)^{l-p} \|u_n - u\|_W^{p} 
\] 
which implies the thesis.
\end{proof}

As useful in the following, we recall a technical lemma (see \cite{GM}).

\begin{lemma}\label{tech}
A constant $b_0 > 0$ exists such that for any $y$, $z \in \R^N$, $N \ge1$, it results
\begin{eqnarray}\label{tech1}
||y|^{l-2}y - |z|^{l-2}z| &\le& b_0 |y-z| (|y|+|z|)^{l-2}\qquad\hbox{if $l > 2$,}\\
\label{tech2}
||y|^{l-2}y - |z|^{l-2}z| &\le& b_0 |y-z|^{l-1}\qquad\qquad\qquad\hbox{if $1 < l \le 2$.}
\end{eqnarray}
\end{lemma}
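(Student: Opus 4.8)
The plan is to study the vector field $F\colon\R^N\to\R^N$, $F(w)=|w|^{l-2}w$, through a mean value argument along the segment joining $y$ and $z$, distinguishing the two ranges of $l$. Away from the origin $F$ is of class $C^1$ with Jacobian
\[
DF(w)\ =\ |w|^{l-2}\Bigl(I+(l-2)\,\frac{w\otimes w}{|w|^2}\Bigr),
\]
whose eigenvalues are $1$ (with multiplicity $N-1$) and $l-1$, so that the operator norm satisfies $|DF(w)|\le\max\{1,l-1\}\,|w|^{l-2}$. Hence, for every $y,z$ such that the segment $[y,z]$ avoids the origin, the fundamental theorem of calculus applied to $t\mapsto F(y+t(z-y))$ gives
\[
|F(y)-F(z)|\ \le\ |y-z|\int_0^1 |DF(y+t(z-y))|\,dt\ \le\ \max\{1,l-1\}\,|y-z|\int_0^1|y+t(z-y)|^{l-2}\,dt.
\]

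When $l>2$ one has $l-2>0$, the estimate $|DF(w)|\le (l-1)|w|^{l-2}$ extends continuously to $w=0$ (so $F\in C^1(\R^N,\R^N)$ with $DF(0)=0$), and the displayed inequality holds for all $y,z$. Since $|y+t(z-y)|\le (1-t)|y|+t|z|\le |y|+|z|$ and $s\mapsto s^{l-2}$ is nondecreasing, the integral is bounded by $(|y|+|z|)^{l-2}$, and \eqref{tech1} follows with $b_0=l-1$.

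The genuinely delicate case is $1<l\le 2$, where $F$ is only H\"older, not Lipschitz, near the origin, so one cannot argue uniformly along all segments. By symmetry assume $|y|\ge|z|$ (the case $y=z=0$ being trivial, so $|y|>0$). If $|y-z|\ge\frac12|y|$, then $|y|\le 2|y-z|$ and $|z|\le|y|\le 2|y-z|$, whence $|F(y)|+|F(z)|=|y|^{l-1}+|z|^{l-1}\le 2^{l}|y-z|^{l-1}$; this settles the estimate by the triangle inequality. If instead $|y-z|<\frac12|y|$, then $|y+t(z-y)|\ge|y|-|y-z|>\frac12|y|>0$ for all $t\in[0,1]$, so the segment stays in the smooth region and the displayed inequality applies with $\max\{1,l-1\}=1$; since $s\mapsto s^{l-2}$ is now nonincreasing, $|y+t(z-y)|^{l-2}\le(\tfrac12|y|)^{l-2}=2^{2-l}|y|^{l-2}$, and using $|y-z|=|y-z|^{2-l}|y-z|^{l-1}\le|y|^{2-l}|y-z|^{l-1}$ we obtain $|F(y)-F(z)|\le 2^{2-l}|y-z|^{l-1}$. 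Taking $b_0=2^l$ (which dominates $2^{2-l}$ for $l\ge1$) yields \eqref{tech2}. The only point requiring care, and the source of the case distinction, is precisely isolating the segments that remain in the region where $F$ is $C^1$ from those short relative to $|y|$, for which the crude bound on $|F(y)|$ and $|F(z)|$ already suffices.
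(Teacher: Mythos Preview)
Your argument is correct. The paper does not actually prove this lemma: it only recalls it as a known technical fact, citing \cite{GM}, so there is no proof in the paper to compare against. Your self-contained treatment---computing the Jacobian $DF(w)=|w|^{l-2}\bigl(I+(l-2)\,w\otimes w/|w|^2\bigr)$, using the integral mean-value formula along $[y,z]$ for $l>2$, and for $1<l\le2$ splitting according to whether $|y-z|\ge\tfrac12|y|$ (crude triangle bound) or $|y-z|<\tfrac12|y|$ (segment bounded away from the origin)---is a standard and clean derivation, with explicit constants $b_0=l-1$ and $b_0=2^l$ in the two regimes.
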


The following estimate is a direct consequence of Lemma \ref{tech}.

\begin{lemma}\label{tech3}
If $p > 1$ a constant $b_1 = b_1(p) > 0$ exists such that 
\[
\int_{\R^N} ||\nabla u|^p - |\nabla v|^p| dx\ \le\ b_1\ \|u-v\|_W 
\big(\|u\|_W^{p-1} + \|v\|_W^{p-1}\big)
\quad\hbox{for any $u$, $v \in W^{1,p}(\R^N)$.} 
\]
\end{lemma}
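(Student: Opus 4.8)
The plan is to reduce the statement to the pointwise inequalities of Lemma~\ref{tech} applied with $y = \nabla u(x)$, $z = \nabla v(x)$, and then integrate, using H\"older's inequality to separate the factor $|\nabla u - \nabla v|$ from the factor involving $|\nabla u|$ and $|\nabla v|$. The natural split is into the two regimes $p > 2$ and $1 < p \le 2$, matching the dichotomy in Lemma~\ref{tech}. In both cases the left-hand side of the claimed inequality is $\int_{\R^N} \big||\nabla u|^p - |\nabla v|^p\big|\,dx$, which is exactly $\int_{\R^N}\big||y|^{l-2}y\cdot\,?\big|$-type quantity once we observe that $|\nabla u|^p = |\,|\nabla u|\,|^p$, i.e.\ we are comparing $l$-th powers of the scalars $|\nabla u|$ and $|\nabla v|$ with $l = p$; applying \eqref{tech1} or \eqref{tech2} to the real numbers $y=|\nabla u(x)|$ and $z = |\nabla v(x)|$ (noting $\big||\nabla u|-|\nabla v|\big|\le|\nabla u-\nabla v|$) gives the pointwise bound.

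Concretely, for $p > 2$, inequality \eqref{tech1} with $l = p$ yields
\[
\big||\nabla u|^p - |\nabla v|^p\big| \ \le\ b_0\,\big||\nabla u| - |\nabla v|\big|\,\big(|\nabla u| + |\nabla v|\big)^{p-2}
\ \le\ b_0\,|\nabla u - \nabla v|\,\big(|\nabla u| + |\nabla v|\big)^{p-2}
\]
pointwise a.e.\ in $\R^N$. Then I would integrate and apply H\"older's inequality with exponents $p$ and $p/(p-1)$:
\[
\int_{\R^N}\big||\nabla u|^p - |\nabla v|^p\big|\,dx
\ \le\ b_0\,|\nabla u - \nabla v|_p\ \Big(\int_{\R^N}\big(|\nabla u| + |\nabla v|\big)^{p}\,dx\Big)^{\frac{p-1}{p}}.
\]
Since $|\nabla u - \nabla v|_p \le \|u-v\|_W$ and, by the elementary inequality $(a+b)^p \le 2^{p-1}(a^p + b^p)$ for $a,b\ge 0$,
\[
\Big(\int_{\R^N}\big(|\nabla u| + |\nabla v|\big)^{p}\,dx\Big)^{\frac{p-1}{p}}
\ \le\ 2^{\frac{(p-1)^2}{p}}\big(|\nabla u|_p^p + |\nabla v|_p^p\big)^{\frac{p-1}{p}}
\ \le\ 2^{\frac{(p-1)^2}{p}}\big(|\nabla u|_p^{p-1} + |\nabla v|_p^{p-1}\big)
\ \le\ 2^{\frac{(p-1)^2}{p}}\big(\|u\|_W^{p-1} + \|v\|_W^{p-1}\big),
\]
the claim follows with $b_1 = b_0\,2^{(p-1)^2/p}$. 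For $1 < p \le 2$, inequality \eqref{tech2} gives the cleaner pointwise bound $\big||\nabla u|^p - |\nabla v|^p\big| \le b_0\,|\nabla u - \nabla v|^{p-1}$; integrating and applying H\"older with exponents $p/(p-1)$ and $p$ bounds the integral by $b_0\,|\nabla u-\nabla v|_p^{p-1}\,\meas(\R^N)^{1/p}$, which is useless since the measure is infinite — so here one must instead argue without H\"older, but then the right-hand side $\|u-v\|_W(\|u\|_W^{p-1}+\|v\|_W^{p-1})$ is not obviously an upper bound for $|\nabla u - \nabla v|_p^{p-1}$ either. The fix is to keep the H\"older step of the $p>2$ computation valid for all $p>1$ by using \eqref{tech1} only when $p \ge 2$ and, for $1 < p < 2$, noting that $\big||\nabla u|-|\nabla v|\big|(|\nabla u|+|\nabla v|)^{p-2}$ still makes sense and in fact $\big||\nabla u|^p-|\nabla v|^p\big|\le p\,\big||\nabla u|-|\nabla v|\big|\max(|\nabla u|,|\nabla v|)^{p-1}\le p\,|\nabla u-\nabla v|(|\nabla u|+|\nabla v|)^{p-1}$ directly from the mean value theorem applied to $t\mapsto t^p$; this restores an integrand of the form $|\nabla u-\nabla v|\cdot(\text{something})^{p-1}$ to which the same H\"older argument applies.

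The step I expect to be the main (only real) obstacle is precisely this uniform treatment of the sub-quadratic case $1<p<2$: Lemma~\ref{tech} as stated hands us \eqref{tech2}, whose $|\nabla u-\nabla v|^{p-1}$ form is the ``wrong'' shape for integrating over all of $\R^N$ against nothing. Everything else — the reduction to scalars via $\big||\nabla u|-|\nabla v|\big|\le|\nabla u-\nabla v|$, the single application of H\"older, and the convexity estimate $(a+b)^p\le 2^{p-1}(a^p+b^p)$ followed by $(s+t)^{(p-1)/p}\le s^{(p-1)/p}+t^{(p-1)/p}$ for $s,t\ge0$ (subadditivity of $r\mapsto r^{(p-1)/p}$ since $0<(p-1)/p<1$) — is routine. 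So in the write-up I would either (a) handle $p\ge 2$ with \eqref{tech1} and $1<p<2$ with the mean-value inequality for $t^p$, or (b) observe that the mean-value bound $\big|t^p - s^p\big| \le p\,|t-s|\,(t+s)^{p-1}$ holds for all $t,s\ge 0$ and all $p\ge 1$ and use it uniformly, absorbing $\max(p,b_0)$ and $2^{(p-1)^2/p}$ into the final constant $b_1=b_1(p)$.
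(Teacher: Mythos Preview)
Your plan~(b) at the end is correct and is essentially what the paper does. The paper's one-line trick is to apply \eqref{tech1} with $l = p+1$ (not $l=p$): since $p+1>2$ for every $p>1$, inequality \eqref{tech1} then holds uniformly and yields exactly the mean-value-type bound
\[
\big||\nabla u|^p - |\nabla v|^p\big|\ \le\ b_0\,\big||\nabla u|-|\nabla v|\big|\,(|\nabla u|+|\nabla v|)^{p-1}\ \le\ b_0\,|\nabla u-\nabla v|\,(|\nabla u|+|\nabla v|)^{p-1},
\]
after which your H\"older step and the convexity/subadditivity estimates go through verbatim. So there is no case split and no difficulty for $1<p\le 2$.

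Your initial computation, however, contains a genuine slip: with $l=p$ and nonnegative scalars $y=|\nabla u|$, $z=|\nabla v|$, one has $|y|^{l-2}y=y^{p-1}$, so \eqref{tech1} would bound $\big||\nabla u|^{p-1}-|\nabla v|^{p-1}\big|$, not $\big||\nabla u|^p-|\nabla v|^p\big|$. This is why your displayed pointwise bound carries the exponent $p-2$ while the very next H\"older display implicitly needs the exponent $p-1$ to land on $\big(\int(|\nabla u|+|\nabla v|)^p\big)^{(p-1)/p}$; the two lines are inconsistent as written. Replacing $l=p$ by $l=p+1$ fixes both the pointwise bound and the H\"older step, and simultaneously removes the sub-quadratic obstacle you identified --- the detour through \eqref{tech2} and the infinite-measure problem simply never arises.
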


\begin{proof}
Taking $u$, $v \in W^{1,p}(\R^N)$, by using \eqref{tech1} with $y = |\nabla u(x)|$,
$z = |\nabla v(x)|$ and $l = p+1$, we have that
\[
||\nabla u|^p - |\nabla v|^p| \le b_0 ||\nabla u| - |\nabla v||\ (|\nabla u| + |\nabla v|)^{p-1} \le
b_0 |\nabla u - \nabla v|\ (|\nabla u| + |\nabla v|)^{p-1} 
\]
for a.e. $x \in \R^N$. Hence, from H\"older inequality and direct computations it follows that
\[
\int_{\R^N} ||\nabla u|^p - |\nabla v|^p| dx\ \le\ b_0\ \|u-v\|_W 
\left(2^{p-1} \int_{\R^N}(|\nabla u|^{p} + |\nabla v|^{p})dx\right)^{\frac{p-1}{p}}
\]
which implies the thesis.
\end{proof}

From now on, let $\, A : \R^N \times \R \to \R\,$
and $\, g :\R^N \times \R \to \R\,$ be such that:
\begin{itemize}
\item[$(H_0)$]
$A(x,t)$ is a $C^1$ Carath\'eodory function, i.e., \\
$A(\cdot,t) : x \in \R^N \mapsto A(x,t) \in \R$ is measurable for all $t \in \R$,\\
$A(x,\cdot) : t \in \R \mapsto A(x,t) \in \R$ 
is $C^1$ for a.e. $x \in \R^N$;
\item[$(H_1)$] $A(x,t)$ and $A_t(x,t)$ are essentially bounded
if $t$ is bounded, i.e., 
\[
\sup_{|t| \le r} |A(\cdot,t)| \in L^\infty(\R^N),\quad \sup_{|t| \le r} |A_t(\cdot,t)| \in L^\infty(\R^N)
\qquad \hbox{for any $r > 0$;}
\]
\item[$(G_0)$] 
$g(x,t)$ is a Carath\'eodory function, i.e.,\\
$g(\cdot,t) : x \in \R^N \mapsto g(x,t) \in \R$ is measurable for all $t \in \R$,\\
$g(x,\cdot) : t \in \R \mapsto g(x,t) \in \R$ is continuous for a.e. $x \in \R^N$;
\item[$(G_1)$] $a_1$, $a_2 > 0$ and $q \ge p$ exist such that
\[
|g(x,t)| \le a_1 |t|^{p-1} + a_2 |t|^{q-1} \qquad
\hbox{a.e. in $\R^N$, for all $t \in \R$.}
\]
\end{itemize}
 
\begin{remark}\label{remG}
From $(G_0)$--$(G_1)$ it follows that $\displaystyle G(x,t) = \int_0^t g(x,s) ds$ is
a well defined $C^1$ Ca\-ra\-th\'eo\-do\-ry function in 
$\R^N \times \R$ and
\begin{equation}
\label{alto3}
|G(x,t)| \le \frac{a_1}p |t|^{p} + \frac{a_2}q |t|^{q} \qquad
\hbox{a.e. in $\R^N$, for all $t \in \R$.}
\end{equation}
\end{remark}

We note that \eqref{space} and $(H_1)$ imply
$A(\cdot,u(\cdot))|\nabla u(\cdot)|^p \in L^1(\R^N)$
for all $u \in X$. Furthermore, 
even if no upper bound on $q$ is actually required in $(G_1)$, 
from Lemma \ref{immergo} it is $G(\cdot,u(\cdot)) \in L^1(\R^N)$
for any $u \in X$, too.
Hence, we can consider the functional $\J : X \to \R$ defined as
\begin{equation}
\label{funct}
\J(u)\ =\  \frac1p\ \int_{\R^N} A(x,u)|\nabla u|^p dx + \frac1p\ \int_{\R^N} |u|^p dx - \int_{\R^N} G(x,u) dx,
\qquad
u \in X.
\end{equation}

Taking any $u$, $v\in X$, by direct computations
it follows that its G\^ateaux differential in $u$ along the direction $v$ is
\begin{equation}
\label{diff}
\begin{split}
\langle d\J(u),v\rangle\ =\ &\int_{\R^N} A(x,u) |\nabla u|^{p-2} \nabla u\cdot \nabla v\ dx\ +\ 
\frac1p\ \int_{\R^N} A_t(x,u) v |\nabla u|^{p} dx\\
& + \ \int_{\R^N} |u|^{p-2} u v\ dx
\ -\ \int_{\R^N} g(x,u)v\ dx .
\end{split}
\end{equation}

\begin{proposition}\label{smooth1}
Taking $p > 1$, assume that $(H_0)$--$(H_1)$, $(G_0)$--$(G_1)$ are satisfied.
If $(u_n)_n \subset X$, $u \in X$, $M> 0$ are such that \eqref{succ1}, \eqref{succ2} hold and
\begin{equation}\label{succ3}
 u_n \to u\quad \hbox{a.e. in $\R^N$} \ \quad\hbox{if $n \to+\infty$,}
\end{equation}
then
\[
\J(u_n) \to \J(u)\quad \hbox{and}\quad \|d\J(u_n) - d\J(u)\|_{X'} \to 0
\quad\hbox{if $\ n\to+\infty$.}
\]
Hence, $\J$ is a $C^1$ functional on $X$ with Fr\'echet differential
defined as in \eqref{diff}.
\end{proposition}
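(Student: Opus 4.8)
The plan is to prove the sequential continuity statement and then deduce $C^1$-regularity from it in the standard way. Throughout I would set $v \in X$ with $\|v\|_X \le 1$ (so in particular $\|v\|_W \le 1$ and $|v|_\infty \le 1$) and estimate each of the four terms in $\langle d\J(u_n)-d\J(u),v\rangle$ uniformly in such $v$; combined with $\J(u_n)\to\J(u)$ this yields the claim, since the Gâteaux differential \eqref{diff} is then shown to be continuous from $X$ to $X'$, hence $\J\in C^1(X,\R)$ with Fréchet derivative $d\J$.

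\medskip

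First I would record the consequences of the hypotheses that will be used repeatedly: by Lemma \ref{immergo2}, \eqref{succ1}–\eqref{succ2} give $u_n\to u$ in $L^l(\R^N)$ for every $l\ge p$; passing to a subsequence we may also assume $|\nabla u_n|\to|\nabla u|$ pointwise a.e.\ and dominated by an $L^p$ function, so that (together with \eqref{succ3}) the Carathéodory and boundedness assumptions $(H_0)$–$(H_1)$, $(G_0)$–$(G_1)$ let us apply dominated convergence. Set $R:=\sup_n|u_n|_\infty \vee |u|_\infty <+\infty$ and let $\mathcal A_R:=\esssup_{|t|\le R}|A(\cdot,t)|$, $\mathcal A'_R:=\esssup_{|t|\le R}|A_t(\cdot,t)|$, both finite by $(H_1)$. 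The convergence $\J(u_n)\to\J(u)$ is the easiest part: split $A(x,u_n)|\nabla u_n|^p - A(x,u)|\nabla u|^p = (A(x,u_n)-A(x,u))|\nabla u_n|^p + A(x,u)(|\nabla u_n|^p-|\nabla u|^p)$, bound the first summand by $2\mathcal A_R|\nabla u_n|^p$ and use a.e.\ convergence of $A(x,u_n)\to A(x,u)$ plus a generalized dominated convergence argument (or Vitali), bound the second using Lemma \ref{tech3} with $\|u_n-u\|_W\to0$; the $|u|^p$ term is immediate and the $G$-term follows from $(G_1)$, \eqref{alto3} and $u_n\to u$ in $L^p\cap L^q$.

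\medskip

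The heart of the argument is the estimate on $\|d\J(u_n)-d\J(u)\|_{X'}$. For the zeroth-order terms, $\big|\int(|u_n|^{p-2}u_n-|u|^{p-2}u)v\big| \le |v|_{p'}\,||u_n|^{p-2}u_n-|u|^{p-2}u|_{p/(p-1)}$ which tends to $0$ uniformly in $\|v\|_X\le1$ by Lemma \ref{tech} and $u_n\to u$ in the relevant $L^l$; similarly $\big|\int(g(x,u_n)-g(x,u))v\big|$ is handled by $(G_1)$ and a Krasnoselskii-type Nemytskii continuity argument, again uniformly in $v$. For the principal part I write
\[
A(x,u_n)|\nabla u_n|^{p-2}\nabla u_n - A(x,u)|\nabla u|^{p-2}\nabla u
= (A(x,u_n)-A(x,u))|\nabla u_n|^{p-2}\nabla u_n + A(x,u)\big(|\nabla u_n|^{p-2}\nabla u_n-|\nabla u|^{p-2}\nabla u\big),
\]
test against $\nabla v$, apply Hölder with exponents $p'=p/(p-1)$ and $p$: the first piece is $\le \||A(x,u_n)-A(x,u)|\,|\nabla u_n|^{p-1}\|_{p'}\,\|v\|_W$ and goes to $0$ by dominated convergence (dominant $(2\mathcal A_R)^{p'}|\nabla u_n|^p$, pointwise limit zero), the second piece is $\le \mathcal A_R\,\||\nabla u_n|^{p-2}\nabla u_n-|\nabla u|^{p-2}\nabla u\|_{p'}\,\|v\|_W$ and goes to $0$ because $w\mapsto |w|^{p-2}w$ is a continuous Nemytskii operator from $L^p(\R^N;\R^N)$ to $L^{p'}(\R^N;\R^N)$ (via Lemma \ref{tech} and $\nabla u_n\to\nabla u$ in $L^p$). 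The genuinely new lower-order term is $\frac1p\int A_t(x,u_n)v|\nabla u_n|^p - \frac1p\int A_t(x,u)v|\nabla u|^p$; here I bound $|v|\le|v|_\infty\le\|v\|_X\le1$, split off $(A_t(x,u_n)-A_t(x,u))|\nabla u_n|^p$ (controlled by dominated convergence with dominant $2\mathcal A'_R|\nabla u_n|^p$) and $A_t(x,u)(|\nabla u_n|^p-|\nabla u|^p)$ (controlled in $L^1$ by $\mathcal A'_R$ times Lemma \ref{tech3}), and crucially the factor $|v|_\infty\le1$ is exactly what makes this term bounded uniformly over the unit ball of $X$ — it would \emph{not} be controllable over the unit ball of $W^{1,p}$, which is precisely why the functional is only differentiable on $X$ and not on $W^{1,p}(\R^N)$.

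\medskip

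I expect the main obstacle to be organizing the dominated-convergence steps cleanly: several terms (e.g.\ $(A(x,u_n)-A(x,u))|\nabla u_n|^{p-1}$) have an integrand whose pointwise limit is zero but whose ``natural'' dominating function $|\nabla u_n|$ depends on $n$, so one must invoke the generalized Lebesgue dominated convergence theorem (using $|\nabla u_n|^p\to|\nabla u|^p$ in $L^1$, which itself follows from Lemma \ref{tech3}) rather than a fixed dominant. Once the four terms are each shown to go to zero uniformly in $\|v\|_X\le 1$, taking the supremum gives $\|d\J(u_n)-d\J(u)\|_{X'}\to0$. Finally, to conclude $\J\in C^1$: \eqref{diff} shows $d\J(u)$ is a well-defined bounded linear functional for each $u\in X$ (using Lemma \ref{immergo} and $(H_1)$, $(G_1)$); the convergence just proved, applied along arbitrary sequences $u_n\to u$ in $X$ (which automatically satisfy \eqref{succ1}, and satisfy \eqref{succ2}, \eqref{succ3} up to a subsequence, with a standard subsequence-of-every-subsequence argument to remove the extraction), shows $u\mapsto d\J(u)$ is continuous $X\to X'$; and a continuous Gâteaux derivative is a Fréchet derivative, so $\J$ is $C^1$ with Fréchet differential \eqref{diff}.
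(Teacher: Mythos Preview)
Your proof is correct and follows essentially the same strategy as the paper's: decompose the functional, handle each piece by H\"older inequality, Lemma~\ref{tech}/\ref{tech3} for the $p$-power Nemytskii maps, and dominated convergence for the coefficient differences, then deduce $C^1$ from continuity of the G\^ateaux derivative. The only noteworthy variation is the order of your splitting, e.g.\ $(A(x,u_n)-A(x,u))|\nabla u_n|^p + A(x,u)(|\nabla u_n|^p-|\nabla u|^p)$ versus the paper's $A(x,u_n)(|\nabla u_n|^p-|\nabla u|^p) + (A(x,u_n)-A(x,u))|\nabla u|^p$: yours pairs the coefficient difference with the $n$-dependent gradient, which forces a generalized dominated convergence argument (moving dominant $|\nabla u_n|^p$) together with a subsequence extraction and a subsequence-of-subsequence recovery, whereas the paper's splitting keeps the fixed factor $|\nabla u|^p$ with the coefficient difference and needs only ordinary DCT with no extraction; both work, the paper's is marginally cleaner. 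One small slip: in your zeroth-order H\"older estimate the exponent on $v$ must be $p$, not $p'$ (for $p>2$ one has $p'<p$ and $X\not\hookrightarrow L^{p'}(\R^N)$), but the intended estimate with $|v|_p$ is of course fine.
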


\begin{proof}
For simplicity, we set $\J = \J_1 + \J_2$, where
\[
\begin{split}
&\J_1: u \in X \ \mapsto\ \J_1(u) = \frac1p \int_{\R^N} A(x,u) |\nabla u|^p dx \in \R, \\
&\J_2: u \in X \ \mapsto\ \J_2(u) = \frac1p\ \int_{\R^N} |u|^p dx - \int_{\R^N} G(x,u) dx \in \R,
\end{split}
\]
with related G\^ateaux differentials 
\[
\begin{split}
&\langle d\J_1(u),v\rangle\ =\ \int_{\R^N} A(x,u) |\nabla u|^{p-2} \nabla u\cdot \nabla v\ dx\ +\ 
\frac1p\ \int_{\R^N} A_t(x,u) v |\nabla u|^{p} dx\\
&\langle d\J_2(u),v\rangle\ =\  \int_{\R^N} |u|^{p-2}u v \ dx - \int_{\R^N} g(x,u)v\ dx.
\end{split}
\]
Let us consider a sequence $(u_n)_n \subset X$ and $u \in X$, $M>0$ such that \eqref{succ1}, \eqref{succ2} and \eqref{succ3}
hold. Then, \eqref{succ1} implies that 
\begin{equation}\label{bnn}
\|u_n\|_W \le c_0 \quad\hbox{for all $\ n\in \N$}
\end{equation}
for a suitable $c_0 > 0$, while from $(H_1)$ and \eqref{succ2} a constant $c_1 > 0$ exists
such that, for all $n \in \N$, we have
\begin{equation}\label{bn1}
|A(\cdot,u_n(\cdot))|_\infty \le c_1, \quad
|A_t(\cdot,u_n(\cdot))|_\infty \le c_1, \quad
|A(\cdot,u(\cdot))|_\infty \le c_1, \quad |A_t(\cdot,u(\cdot))|_\infty \le c_1.
\end{equation}
Firstly, we prove that
\begin{equation}\label{bnn1}
\J_1(u_n) \to \J_1(u)\quad \hbox{and}\quad \|d\J_1(u_n) - d\J_1(u)\|_{X'} \to 0
\quad\hbox{if $\ n\to+\infty$.}
\end{equation}
To this aim, we note that 
\[
|\J_1(u_n) - \J_1(u)| \le \frac1p \int_{\R^N} |A(x,u_n)|\ ||\nabla u_n|^p - |\nabla u|^p|\ dx +
\frac1p \int_{\R^N} |A(x,u_n) - A(x,u)|\ |\nabla u|^pdx,
\]
where \eqref{bn1} and Lemma \ref{tech3} with \eqref{succ1} and \eqref{bnn} imply that
\[
\int_{\R^N} |A(x,u_n)|\ | |\nabla u_n|^p - |\nabla u|^p|dx \to 0.
\]
Moreover, from $(H_0)$ and \eqref{succ3} it follows that 
\[
|A(x,u_n) - A(x,u)|\ |\nabla u|^p \to 0\qquad \hbox{a.e in $\R^N$,}
\]
while from \eqref{bn1} we obtain that 
\[
|A(x,u_n) - A(x,u)|\ |\nabla u|^p \le 2 c_1 |\nabla u|^p 
\quad \hbox{a.e in $\R^N$, with  $|\nabla u|^p \in L^1(\R^N)$,}
\]
so, from the Lebesgue Dominated Convergence Theorem we have that
\[
\int_{\R^N} |A(x,u_n) - A(x,u)|\ |\nabla u|^p dx \to 0
\]
which implies the first limit in \eqref{bnn1}.\\
Now, taking $v \in X$ such that $\|v\|_X = 1$, we have 
\begin{equation}\label{space1}
|v|_\infty \le 1, \quad \|v\|_W \le 1
\end{equation}
and 
\begin{eqnarray*}
|\langle d\J_1(u_n) - d\J_1(u),v\rangle| &\le&
 \int_{\R^N} |A(x,u_n)|\ ||\nabla u_n|^{p-2}\nabla u_n - |\nabla u|^{p-2}\nabla u||\nabla v|dx\\
&& +\ \int_{\R^N} |A(x,u_n) - A(x,u)| |\nabla u|^{p-1}|\nabla v|dx\\
&& +\ \frac1p\ \int_{\R^N} |A_t(x,u_n)|\ | |\nabla u_n|^p - |\nabla u|^p| |v|dx\\
&& +\ \frac1p\ \int_{\R^N} |A_t(x,u_n) - A_t(x,u)| |\nabla u|^p|v|dx.
\end{eqnarray*}
From  \eqref{bn1}, H\"older inequality and \eqref{space1}, it follows that 
\begin{equation}\label{stima1}
\begin{split}
 &\int_{\R^N} |A(x,u_n)|\ ||\nabla u_n|^{p-2}\nabla u_n - |\nabla u|^{p-2}\nabla u|\ |\nabla v|dx\\
&\qquad \le  c_1 \left( \int_{\R^N} ||\nabla u_n|^{p-2}\nabla u_n - |\nabla u|^{p-2}\nabla u|^{\frac{p}{p-1}}dx\right)^{\frac{p-1}{p}}.
\end{split}
\end{equation}
From one hand, if $p > 2$ from \eqref{tech1}, H\"older inequality with $l = p-1$ and $l' = \frac{p-1}{p-2}$, \eqref{bnn}
and direct computations we have that
\begin{equation}\label{stima2}
\begin{split}
&\left( \int_{\R^N} ||\nabla u_n|^{p-2}\nabla u_n - |\nabla u|^{p-2}\nabla u|^{\frac{p}{p-1}}dx\right)^{\frac{p-1}{p}}\\
&\qquad \le \ b_0 \left( \int_{\R^N} |\nabla u_n - \nabla u|^{\frac{p}{p-1}} 
(|\nabla u_n| + |\nabla u|)^{\frac{p(p-2)}{p-1}}dx\right)^{\frac{p-1}{p}}\\
&\qquad \le \ b_0 \left( \int_{\R^N} |\nabla u_n - \nabla u|^p dx\right)^{\frac{1}{p}} 
\left( \int_{\R^N}(|\nabla u_n| + |\nabla u|)^{p}dx\right)^{\frac{p-2}{p}}\\
&\qquad \le \ c_2 \|u_n - u\|_W
\end{split}
\end{equation} 
for a suitable $c_2 > 0$ independent of $n$. \\
On the other hand, if $1 < p \le 2$ from \eqref{tech2} we have
\begin{equation}\label{stima3}
\left( \int_{\R^N} ||\nabla u_n|^{p-2}\nabla u_n - |\nabla u|^{p-2}\nabla u|^{\frac{p}{p-1}}dx\right)^{\frac{p-1}{p}}\
 \le \ b_0 \|u_n - u\|_W^{p-1}.
\end{equation} 
Whence, from \eqref{stima1}--\eqref{stima3} and \eqref{succ1} it follows that 
\[
\int_{\R^N} |A(x,u_n)| ||\nabla u_n|^{p-2}\nabla u_n - |\nabla u|^{p-2}\nabla u||\nabla v|dx\ \to \ 0 
\quad\hbox{uniformly with respect to $v$.}
\]
Moreover,  H\"older inequality and \eqref{space1} imply that
\[
\int_{\R^N} |A(x,u_n) - A(x,u)| |\nabla u|^{p-1}|\nabla v|dx\le
\left(\int_{\R^N} |A(x,u_n) - A(x,u)|^{\frac{p}{p-1}} |\nabla u|^{p}dx\right)^{\frac{p-1}{p}},
\]
where
\[
\int_{\R^N} |A(x,u_n) - A(x,u)|^{\frac{p}{p-1}} |\nabla u|^{p}dx \ \to \ 0 \quad\hbox{uniformly with respect to $v$}
\]
from the Lebesgue Dominated Convergence Theorem, as $(H_0)$ and \eqref{succ3} imply
\[
|A(x,u_n) - A(x,u)|^{\frac{p}{p-1}} |\nabla u|^{p} \to 0 \qquad \hbox{a.e in $\R^N$}
\]
and from \eqref{bn1} it follows 
\[
|A(x,u_n) - A(x,u)|^{\frac{p}{p-1}} |\nabla u|^{p} \le 2 c_1^{\frac{p}{p-1}} |\nabla u|^p \quad \hbox{a.e. in $\R^N$, with}
\ |\nabla u|^p \in L^1(\R^N).
\]
Finally, by using again \eqref{space1} we have
\[\begin{split}
&\int_{\R^N} |A_t(x,u_n)|\ | |\nabla u_n|^p - |\nabla u|^p|\ |v|dx + \int_{\R^N} |A_t(x,u_n) - A_t(x,u)|\ |\nabla u|^p|v|dx\\
&\qquad 
\le \int_{\R^N} |A_t(x,u_n)|\ | |\nabla u_n|^p - |\nabla u|^p|dx
 + \int_{\R^N} |A_t(x,u_n) - A_t(x,u)|\ |\nabla u|^p dx,
\end{split}
\]
where, by reasoning as in the first part of this proof 
but replacing $A(x,t)$ with $A_t(x,t)$, we obtain
\[
\int_{\R^N} |A_t(x,u_n)|\ | |\nabla u_n|^p - |\nabla u|^p|dx \to 0,\quad
\int_{\R^N} |A_t(x,u_n) - A_t(x,u)| |\nabla u|^p dx \to 0.
\]
Hence, summing up, 
$|\langle d\J_1(u_n) - d\J_1(u),v\rangle| \to 0$ uniformly with respect to $v$
if $\|v\|_X =1$, i.e., \eqref{bnn1} is completely proved.\\
At last, we claim that
\begin{equation}\label{bnn2}
\J_2(u_n) \to \J_2(u)\quad \hbox{and}\quad \|d\J_2(u_n) - d\J_2(u)\|_{X'} \to 0
\quad\hbox{if $\ n\to+\infty$.}
\end{equation}
To this aim, firstly we note that
\[
|\J_2(u_n) - \J_2(u)|\ \le\ \frac1p\ ||u_n|_p^p - |u|_p^p|\ + \int_{\R^N} |G(x,u_n) - G(x,u)| dx,
\]
where \eqref{Sob1} and \eqref{succ1} imply that
\[
||u_n|_p^p - |u|_p^p| \ \to\ 0.
\]
Furthermore, from \eqref{succ3} and Remark \ref{remG} it follows that
\[
G(x,u_n)\ \to\ G(x,u) \quad \hbox{a.e. in $\R^N$}
\]
and 
\[
|G(x,u_n) - G(x,u)|\ \le\ \frac{a_1}{p} |u_n|^p + \frac{a_2}{q} |u_n|^q 
+ \frac{a_1}{p} |u|^p + \frac{a_2}{q} |u|^q \qquad \hbox{a.e. in $\R^N$,}
\]
where $u_n \to u$ both in $L^p(\R^N)$ and in $L^q(\R^N)$ from Lemma \ref{immergo2}. Hence, $h \in L^1(\R^N)$ exists such that
\[
\frac{a_1}{p} |u_n|^p + \frac{a_2}{q} |u_n|^q 
+ \frac{a_1}{p} |u|^p + \frac{a_2}{q} |u|^q\ \le\ h \qquad \hbox{a.e. in $\R^N$,}
\]
so the Lebesgue Dominated Convergence Theorem implies
\[
\int_{\R^N} |G(x,u_n) - G(x,u)| dx \ \to\ 0.
\]
On the other hand, fixing $v \in X$ such that $\|v\|_X = 1$, 
from H\"older inequality, \eqref{Sob1} and \eqref{space1} we have that
\[
\begin{split}
&|\langle d\J_2(u_n) - \J_2(u),v\rangle|\ \le\  \int_{\R^N} ||u_n|^{p-2}u_n - |u|^{p-2}u|\ |v| \ dx 
+ \int_{\R^N} |g(x,u_n) - g(x,u)|\ |v|\ dx\\
&\qquad\le\  \left(\int_{\R^N} ||u_n|^{p-2}u_n - |u|^{p-2}u|^{\frac{p}{p-1}} dx\right)^{\frac{p-1}{p}}
+ \left(\int_{\R^N} |g(x,u_n) - g(x,u)|^{\frac{p}{p-1}} dx\right)^{\frac{p-1}{p}}.
\end{split}
\]
By reasoning as in the proof of \eqref{stima2}, respectively \eqref{stima3},
but replacing $\nabla u_n$ with $u_n$ and $\nabla u$ with $u$,
from \eqref{succ1} we obtain 
\[
\int_{\R^N} ||u_n|^{p-2}u_n - |u|^{p-2}u|^{\frac{p}{p-1}} dx \ \to \ 0.
\]
Moreover, $(G_0)$ and \eqref{succ3} imply
\[
|g(x,u_n) - g(x,u)|^{\frac{p}{p-1}}\ \to\ 0 \qquad \hbox{a.e. in $\R^N$,}
\]
while $(G_1)$ and direct computations give
\[
|g(x,u_n) - g(x,u)|^{\frac{p}{p-1}}\ \le\ (4 a_1^p)^{\frac{1}{p-1}} (|u_n|^p + |u|^p) + 
(4 a_2^p)^{\frac{1}{p-1}} \big(|u_n|^{p\frac{q-1}{p-1}} + |u|^{p\frac{q-1}{p-1}}\big)
\qquad \hbox{a.e. in $\R^N$,}
\]
where $q \ge p$ and Lemma \ref{immergo2} imply that $h_1 \in L^1(\R^N)$ exists such that
\[
|g(x,u_n) - g(x,u)|^{\frac{p}{p-1}}\ \le\ h_1(x) \qquad \hbox{a.e. in $\R^N$.}
\]
Whence, by applying again the Lebesgue Dominated Convergence Theorem we have that
\[
\int_{\R^N} |g(x,u_n) - g(x,u)|^{\frac{p}{p-1}} dx \ \to\ 0.
\]
Thus, summing up, 
$|\langle d\J_2(u_n) - d\J_2(u),v\rangle| \to 0$ uniformly with respect to $v$
if $\|v\|_X =1$, i.e., \eqref{bnn2} is satisfied.
\end{proof}


\section{Statement of the main result} \label{secmain}

From now on, we assume that in addition to hypotheses $(H_0)$--$(H_1)$ and $(G_0)$--$(G_1)$,
functions $A(x,t)$ and $g(x,t)$ satisfy the following further conditions:
\begin{itemize}
\item[$(H_2)$] a constant $\alpha_0 > 0$ exists such that
\[
A(x,t) \ge \alpha_0 \qquad \hbox{a.e. in $\R^N$, for all $t \in \R$;}
\]
\item[$(H_3)$] some constants $R \ge 1$ and $\alpha_1 > 0$ exist such that
\[
p A(x,t) + A_t(x,t) t \ge \alpha_1 A(x,t)\quad\hbox{a.e. in $\R^N$ if $|t| \ge R$;}
\]
\item[$(H_4)$] some constants $\mu > p$ and $\alpha_2 > 0$ exist such that
\[
(\mu - p) A(x,t) - A_t(x,t) t \ge \alpha_2 A(x,t)\quad\hbox{a.e. in $\R^N$, for all $t \in \R$;}
\]
\item[$(H_5)$] $\ A(x,t) = A(|x|,t)\ $ a.e. in $\R^N$, for all $t \in \R$; 
\item[$(G_2)$] $\; \displaystyle \lim_{t \to 0} \frac{g(x,t)}{|t|^{p-1}}\ =\ 0\;$ uniformly for a.e. $x \in \R^n$;
\item[$(G_3)$] taking $\mu$ as in $(H_4)$, then
\[
0 < \mu G(x,t) \le g(x,t) t\quad\hbox{a.e. in $\R^N$, for all $t \in \R\setminus\{0\}$;}
\]
\item[$(G_4)$] $\ g(x,t) = g(|x|,t)\ $ a.e. in $\R^N$, for all $t \in \R$.
\end{itemize}

\begin{remark}\label{part1}
If we consider the special coefficient 
\begin{equation}
\label{part2}
A(x,t)\ =\ A_1(x) + A_2(x) |t|^{p s} 
\end{equation}
then $(H_0)$--$(H_1)$ are satisfied if $p s > 1$ and $A_1$, $A_2 \in L^\infty(\R^N)$, 
$(H_2)$ follows from 
\[
A_1(x) \ge \alpha_0,\qquad A_2(x) \ge 0\qquad \hbox{a.e. in $\R^N$}
\]
for a constant $\alpha_0 > 0$, $(H_3)$ is always true, $(H_4)$
holds if $\mu > p(1+s)$, while $(H_5)$ reduces to assume that 
both $A_1(x)$ and $A_2(x)$ are radially symmetric. 
\end{remark}

Now, we are able to state our main existence result.

\begin{theorem} \label{mainthm}
Assume that $A(x,t)$ and $g(x,t)$ satisfy
conditions $(H_0)$--$(H_5)$, $(G_0)$--$(G_4)$ with $1 < p < q < p^*$. 
Then, problem \eqref{euler} admits at least one weak bounded radial solution.
\end{theorem}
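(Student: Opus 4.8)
\noindent\emph{Proof strategy.}
The plan is to obtain the solution as a Mountain Pass critical point of $\J$ restricted to the radial subspace $X_r := W^{1,p}_r(\R^N)\cap L^\infty(\R^N)$, and then to lift it to a critical point of $\J$ on all of $X$. Indeed, $(H_5)$ and $(G_4)$ make $\J$ invariant under the isometric action of $O(N)$ on $X$ given by $(\gamma\cdot u)(x)=u(\gamma^{-1}x)$, whose fixed--point set is exactly $X_r$; hence (by the principle of symmetric criticality, or directly by averaging test functions over $O(N)$) every critical point of $\J|_{X_r}$ is a critical point of $\J$ on $X$, so by \eqref{diff} a weak solution of \eqref{euler}, automatically bounded since it lies in $X$, and nontrivial as soon as its critical level is positive. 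Thus it suffices to apply Theorem \ref{mountainpass} to $\J|_{X_r}$ with $W=W^{1,p}_r(\R^N)$, i.e.\ to verify the geometric hypotheses and the $(wCPS)$ condition of Definition \ref{wCPSdef}.

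\emph{Mountain Pass geometry.} One has $\J(0)=0$. Near the origin, $(H_2)$ gives $\J(u)\ge\frac{\alpha_0}{p}|\nabla u|_p^p+\frac1p|u|_p^p-\int_{\R^N}G(x,u)\,dx$; combining $(G_2)$ with $(G_1)$ produces, for every $\eps>0$, a constant $C_\eps>0$ with $|G(x,t)|\le\eps|t|^p+C_\eps|t|^q$, so that \eqref{Sob1} (applicable since $p<q<p^*$) yields $\J(u)\ge c\,\|u\|_W^p-C\,\|u\|_W^q$ with $c>0$; as $q>p$, for $r>0$ small this gives $\J(u)\ge\varrho>0$ whenever $\|u\|_W=r$. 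For the far point, fix a radial $\bar u\in C^\infty_c(\R^N)\setminus\{0\}$: integrating the differential inequality in $(H_4)$ (and using $(H_1)$) gives $A(x,t)\le c_1(1+|t|^{(\mu-p-\alpha_2)^+})$, hence $\int_{\R^N}A(x,t\bar u)|\nabla(t\bar u)|^p\,dx\le C(t^p+t^{\mu-\alpha_2})$; integrating the inequality in $(G_3)$ gives $G(x,t)>0$ for $t\ne0$ and $G(x,t)\ge G(x,t_0)(|t|/t_0)^\mu$ for $|t|\ge t_0$, whence $\int_{\R^N}G(x,t\bar u)\,dx\ge c_2\,t^\mu$ for $t$ large. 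Since $\mu>p$ and $\mu>\mu-\alpha_2$, we obtain $\J(t\bar u)\to-\infty$ as $t\to+\infty$, so $e:=t^*\bar u$ with $t^*$ large satisfies $\|e\|_W>r$ and $\J(e)<\varrho$.

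\emph{The $(wCPS)$ condition --- the main obstacle.} Let $(u_n)_n\subset X_r$ be a $(CPS)_\beta$--sequence. The first step is boundedness of $(u_n)_n$ in $W^{1,p}_r(\R^N)$: from \eqref{funct}, \eqref{diff} and $(H_2)$, $(H_4)$, $(G_3)$,
\[
\mu\,\J(u_n)-\langle d\J(u_n),u_n\rangle\ \ge\ \frac{\alpha_2\alpha_0}{p}\,|\nabla u_n|_p^p+\frac{\mu-p}{p}\,|u_n|_p^p\ \ge\ c\,\|u_n\|_W^p,
\]
while the left--hand side is $\le\mu|\J(u_n)|+\|d\J(u_n)\|_{X_r'}\|u_n\|_X$, which stays bounded precisely because the Cerami condition controls $\|d\J(u_n)\|_{X_r'}(1+\|u_n\|_X)$ and not merely $\|d\J(u_n)\|_{X_r'}(1+\|u_n\|_W)$ --- this is exactly the point of the two--norm Cerami framework. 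Next, the compact radial embedding $W^{1,p}_r(\R^N)\hookrightarrow\hookrightarrow L^l(\R^N)$ for $p<l<p^*$ (Strauss' radial lemma) gives, up to a subsequence, $u_n\wk u$ in $W^{1,p}_r(\R^N)$, $u_n\to u$ in $L^q(\R^N)$ and a.e.\ in $\R^N$. The crux is then to prove $u\in L^\infty(\R^N)$, hence $u\in X_r$: one tests $d\J(u_n)$ with $v_n=(|u_n|-k)^+\,\mathrm{sign}(u_n)$, notes that $\|v_n\|_X\le\|u_n\|_X$ so that $\langle d\J(u_n),v_n\rangle\to0$ by the Cerami condition, uses $(H_2)$ and $(H_3)$ to bound the coefficient of $|\nabla u_n|^p$ from below by a positive constant on $\{|u_n|>k\}$ for $k\ge R$, and controls the lower--order terms through the subcritical growth $q<p^*$, which also makes $(|u_n|^q)_n$ equi--integrable; a Stampacchia--type iteration over the level sets $\{|u_n|>k\}$, passed to the limit via \eqref{Sob1} and weak lower semicontinuity, then yields an $L^\infty$ bound for $u$. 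Finally one proves $\|u_n-u\|_W\to0$ --- writing $u_n-u$ as a truncation $T_k(u_n-u)$ at level $k$, bounded in $L^\infty$ and handled through the monotonicity of $\xi\mapsto|\xi|^{p-2}\xi$ and estimates as in the proof of Proposition \ref{smooth1}, plus a tail whose $W^{1,p}$--norm is uniformly small for $k$ large by the estimate above --- and then passes to the limit in $\langle d\J(u_n),\varphi\rangle=o(1)$, $\varphi\in X_r$, splitting $\R^N$ into $\{|u_n|\le k\}$, where $A(\cdot,u_n)$ and $A_t(\cdot,u_n)$ are uniformly bounded by $(H_1)$, and $\{|u_n|>k\}$, which carries uniformly small gradient mass, and then letting $k\to\infty$; this gives $d\J(u)=0$ and, by the same splitting applied to \eqref{funct}, $\J(u)=\beta$. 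This verification --- above all the $L^\infty$ bound on the weak limit, which forces $u$ back into $X$ --- is where essentially all the difficulty lies; granted it, Theorem \ref{mountainpass} produces a critical point $u^*\in X_r$ of $\J|_{X_r}$ with $\J(u^*)\ge\varrho>0$, and symmetric criticality turns it into the desired weak bounded radial solution of \eqref{euler}.
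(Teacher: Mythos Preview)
Your overall strategy is the paper's: reduce to $X_r$ by symmetric criticality, verify the Mountain Pass geometry (your argument is correct and parallels the paper's), establish the $(wCPS)$ condition, and apply Theorem~\ref{mountainpass}. The paper packages $(wCPS)$ as a separate proposition and simply cites it; your sketch of that verification has the right architecture --- boundedness in $W^{1,p}$ via $\mu\J(u_n)-\langle d\J(u_n),u_n\rangle$, weak limit $u$, $L^\infty$ bound on $u$, strong $W^{1,p}$ convergence, identification of the limit --- but one step has a real gap.

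The gap is in proving $\|u_n-u\|_W\to 0$. Testing $d\J(u_n)$ (or $d\J(T_ku_n)$) against any bounded variant of $u_n-u$ produces, besides the principal monotone term, the contribution $\frac1p\int A_t(x,u_n)\,(u_n-u)\,|\nabla u_n|^p\,dx$. On $\{|u_n|\le k\}$ the factor $A_t(x,u_n)(u_n-u)$ is bounded and tends to $0$ a.e., but $|\nabla u_n|^p$ is merely bounded in $L^1$ and is not known to be equi-integrable, so this integral does \emph{not} vanish and cannot be controlled by ``monotonicity of $\xi\mapsto|\xi|^{p-2}\xi$'' alone; the estimates in Proposition~\ref{smooth1} do not help either, since there one already assumes strong $W^{1,p}$ convergence. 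The paper resolves this with the Arcoya--Boccardo device: after first showing $\|d\J(T_ku_n)\|_{X_r'}\to 0$ for the truncations $T_ku_n$, it tests with $\psi(T_ku_n-u)$ where $\psi(t)=t\,e^{\eta t^2}$ and $\eta$ is chosen so that $\psi'(t)-\beta|\psi(t)|>\tfrac12$; this lets the $A_t$ contribution be absorbed into the principal $A$ term with a net positive coefficient $\ge\tfrac{\alpha_0}{2}$, after which the $p$-Laplacian monotonicity argument goes through. A secondary omission: your Stampacchia iteration for $u\in L^\infty$ tacitly needs the level sets $\{|u_n|>k\}$ to lie in a fixed bounded domain; the paper secures this from the pointwise radial decay (Lemma~\ref{radiallemma}), which forces $\{|u_n|>k\}\subset B_1(0)$ uniformly in $n$ for $k$ large, and then applies a Ladyzhenskaya--Ural'tseva bound on $B_1(0)$.
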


\begin{remark}
If $A(x,t)$ is as in \eqref{part2} and $g(x,t) = |t|^{\mu-2}t$,
since $(G_0)$--$(G_4)$ hold with $q = \mu > 1$, in the hypotheses pointed out in Remark \ref{part1} 
we have that Theorem \ref{mainthm} applies if 
\begin{equation}
\label{e0p}
1 < p < p+1 < p(1 + s) < \mu < p^*.
\end{equation}
We note that, if $1 < p < N$, from \eqref{e0p}
it has to be $N < p^2+p$. In particular, if $p=2$, we obtain 
a solution of \eqref{euler0} when $3 \le N <6$. Thus, 
Theorem \ref{mainthm} reduces to Theorem \ref{mod0} if $p=2$ and $N=3$.
\end{remark}

In order to prove Theorem \ref{mainthm}, we need some direct consequences 
of the previous assumptions.

\begin{remark}
In $(H_3)$ and $(H_4)$ we can always assume $\alpha_1 < p$ and
$\alpha_2 < \mu-p$. Hence, $\alpha_1 + \alpha_2 < \mu$, and we have
\[
- (p-\alpha_1) A(x,t)\ \le\ A_t(x,t)t\ \le\ (\mu-p-\alpha_2) A(x,t)
\quad\hbox{a.e. in $\R^N$ if $|t| \ge R$.}
\]
Whence, there results
\begin{equation}
\label{cabb}
|A_t(x,t)t|\ \le\ \alpha_3 A(x,t)\quad\hbox{a.e. in $\R^N$ if $|t| \ge R$,}
\end{equation}
with $\alpha_3 = \max\{p-\alpha_1,\mu-p-\alpha_2\}$, which implies
\begin{equation}
\label{cabb1}
|A_t(x,t)|\ \le\ \frac{\alpha_3}R A(x,t)\quad\hbox{a.e. in $\R^N$ if $|t| \ge R$.}
\end{equation}
Moreover, from $(H_0)$--$(H_2)$, $(H_4)$ and direct computations 
it follows that
\begin{equation}
\label{alto}
A(x,t)\ \le\ \alpha_4 + \alpha_5\ |t|^{\mu-p-\alpha_2}\qquad\hbox{a.e. in $\R^N$, for all $t \in\R$,}
\end{equation}
for suitable $\alpha_4$, $\alpha_5 > 0$.
\end{remark}

\begin{remark} \label{bassoG}
Conditions $(G_0)$--$(G_1)$ and $(G_3)$ imply that for any $\eps > 0$ a function $\eta_\eps \in L^\infty(\R^N)$,
$\eta_\eps(x) > 0$ a.e. in $\R^N$, exists such that
\begin{equation}
\label{basso3}
G(x,t)\ \ge\ \eta_\eps(x)\ |t|^\mu\qquad\hbox{a.e. in $\R^N$ if $|t| \ge \eps$.}
\end{equation}
Hence, from \eqref{alto3} and \eqref{basso3} it follows that
\[
p < \mu \le q.
\]
\end{remark}

\begin{remark} 
From assumptions $(G_1)$--$(G_2)$ and direct computations it follows that 
for any $\eps > 0$ a constant $a_\eps > 0$ exists such that
\[
|g(x,t)|\ \le\ \eps |t|^{p-1} + a_\eps |t|^{q-1}\qquad\hbox{a.e. in $\R^N$ for all $t \in \R^N$,}
\]
and then
\begin{equation}
\label{altoGeps}
|G(x,t)|\ \le\ \frac{\eps}{p}\ |t|^{p} + \frac{a_\eps}{q} |t|^{q}\qquad\hbox{a.e. in $\R^N$ for all $t \in \R^N$.}
\end{equation}
\end{remark}

From Proposition \ref{smooth1} it follows that 
looking for weak (bounded) solutions of \eqref{euler} is equivalent to finding
critical points of the $C^1$ functional $\J$, defined as in \eqref{funct},
on the Banach space $X$ introduced in \eqref{space}.

Unluckily, differently from the bounded case, the 
embeddings of $X$ in suitable Lebesgue spaces are only continuous. 
So, in order to overcome the lack of
compactness, we can reduce to work in the space of radial functions 
which is a natural constraint if the problem is radially invariant (see \cite{Pa}).
Thus, in our setting, we consider the space
\begin{equation}\label{spacer}
X_r := W_r^{1,p}(\R^N) \cap L^\infty(\R^N)
\end{equation}
endowed with norm $\|\cdot\|_X$, which has dual space $(X'_r,\|\cdot\|_{X'_r})$.

The following results hold.

\begin{lemma}[Radial Lemma]\label{radiallemma}
If $p > 1$, then every radial function $u\in W_r^{1,p}(\R^N)$ is almost everywhere
equal to a function $U(x)$, continuous for $x \ne 0$, such that
\begin{equation}
|U(x)|\ \leq\ C \frac{\|u\|_{W}}{|x|^{\vartheta}} \quad \hbox{for all $x \in \R^N$ with $|x| \geq 1$,}
\label{lemmaradiale}
\end{equation}
for suitable constants $C$, $\vartheta > 0$ depending only on $N$ and $p$.
\end{lemma}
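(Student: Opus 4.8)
The plan is to reduce, by density, to smooth compactly supported radial functions and then prove the pointwise decay by a one--dimensional H\"older argument, in the spirit of the classical Strauss lemma; this will give \eqref{lemmaradiale} with $\vartheta = \frac{N-1}{p}$.

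First I would take $u \in C_c^\infty(\R^N)$ radial, write $u(x) = w(|x|)$ with $w \in C_c^\infty([0,+\infty))$, and note that, since $w$ has compact support, for every $r \ge 1$
\[
|w(r)|^p\ =\ -\int_r^{+\infty} \frac{d}{ds}|w(s)|^p\, ds\ =\ -p\int_r^{+\infty} |w(s)|^{p-2} w(s)\, w'(s)\, ds\ \le\ p \int_r^{+\infty} |w(s)|^{p-1}|w'(s)|\, ds.
\]
Then I would insert the dimensional weight by writing $|w|^{p-1}|w'| = \left(|w|^p s^{N-1}\right)^{\frac{p-1}{p}}\left(|w'|^p s^{N-1}\right)^{\frac1p} s^{-(N-1)}$, use that $s^{-(N-1)} \le r^{-(N-1)}$ for $s \ge r \ge 1$, and apply H\"older's inequality with exponents $\frac{p}{p-1}$ and $p$ to obtain
\[
\int_r^{+\infty} |w(s)|^{p-1}|w'(s)|\, ds\ \le\ r^{-(N-1)} \left(\int_r^{+\infty} |w(s)|^p s^{N-1}\, ds\right)^{\frac{p-1}{p}} \left(\int_r^{+\infty} |w'(s)|^p s^{N-1}\, ds\right)^{\frac1p}.
\]
Passing to polar coordinates, the two integrals on the right are bounded by $\frac{1}{\omega_{N-1}}|u|_p^p$ and $\frac{1}{\omega_{N-1}}|\nabla u|_p^p$ respectively, where $\omega_{N-1}$ is the surface measure of the unit sphere of $\R^N$, hence by $\|u\|_W^p$ (using $|u|_p^{p-1}|\nabla u|_p \le \|u\|_W^p$). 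This yields $|w(r)|^p \le C^p\, \|u\|_W^p\, r^{-(N-1)}$ for all $r \ge 1$, i.e. \eqref{lemmaradiale} with $\vartheta = \frac{N-1}{p}$ and $C = C(N,p)$; continuity of $U$ for $x \ne 0$ is obvious here, since $w$ is smooth.

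For a general $u \in W_r^{1,p}(\R^N)$ I would invoke the density of smooth compactly supported radial functions in $W_r^{1,p}(\R^N)$: choosing $(u_n)_n \subset C_c^\infty(\R^N)$ radial with $u_n \to u$ in $W^{1,p}(\R^N)$ and applying the estimate just proved to $u_n - u_m$, one sees that, writing $u_n(x) = w_n(|x|)$, the sequence $(w_n)_n$ is uniformly Cauchy on every interval $[\delta,+\infty)$ with $\delta > 0$; hence it converges uniformly there to a function $U$, continuous on $\R^N \setminus \{0\}$, which inherits the bound \eqref{lemmaradiale}. Since $u_n \to u$ in $L^p(\R^N)$, a subsequence converges to $u$ almost everywhere, so $U = u$ a.e., which proves the claim.

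I do not expect a genuine obstacle here; the only points requiring a little care are the justification that $s \mapsto |w(s)|^p$ is absolutely continuous with $\frac{d}{ds}|w|^p = p|w|^{p-2}w\,w'$ a.e. when $1 < p < 2$ (which follows since $w$ is smooth with compact support, so $|w|^{p-1}|w'| \in L^1$), and the standard density of radial smooth functions in $W_r^{1,p}(\R^N)$.
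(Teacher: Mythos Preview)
Your argument is correct. It differs from the paper's in the way the one--dimensional H\"older step is carried out: the paper treats two cases, citing references for $p\ge 2$ (with exponent $\vartheta=\frac{N-2}{p}$) and giving, for $1<p<N$, the direct bound
\[
|\phi(r)|\ \le\ \int_r^{+\infty}|\phi'(s)|\,ds\ \le\ \Big(\int_r^{+\infty}|\phi'(s)|^p s^{N-1}\,ds\Big)^{\!1/p}\Big(\int_r^{+\infty} s^{\frac{1-N}{p-1}}\,ds\Big)^{\!\frac{p-1}{p}},
\]
which yields $\vartheta=\frac{N-p}{p}$ but needs $p<N$ for the second integral to converge. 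Your Strauss--type estimate of $|w(r)|^p$ instead of $|w(r)|$ handles all $p>1$ in one stroke and produces the sharper exponent $\vartheta=\frac{N-1}{p}$, at the price of also involving $|u|_p$ (harmless here, since the statement allows $\|u\|_W$). One small point: your inequality $s^{-(N-1)}\le r^{-(N-1)}$ holds for all $s\ge r>0$, not only $r\ge 1$, so your decay estimate is in fact valid for every $r>0$; this is what actually justifies your later claim that $(w_n)_n$ is uniformly Cauchy on $[\delta,+\infty)$ for every $\delta>0$ (as written, with the restriction $r\ge 1$, you would only get uniform Cauchy on $[1,+\infty)$).
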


\begin{proof}
Firstly, we note that by classical results every radial function $u\in W_r^{1,p}(\R^N)$ can be assumed 
to be continuous at all points except the origin (see \cite{Br}).\\
Now, if $p \ge 2$ and $\vartheta= \frac{N -2}{p}$, 
the proof of \eqref{lemmaradiale} follows from  \cite[Lemma A.III]{BL} 
but reasoning as in \cite[Lemma 3.5]{Pi} (see also \cite[Lemma 3.1]{CS2011}).\\
On the other hand, if $1 < p < N$, we prove that 
\eqref{lemmaradiale} holds with $\vartheta= \frac{N - p}{p}$ following the ideas in \cite[Lemma 3.1.2]{BS}.
In fact, by using a density argument, it is enough to prove the inequality for any function 
$u \in C^\infty_0(\R^N) \cap W_r^{1,p}(\R^N)$. Then, $\phi \in C^\infty_0([0,+\infty[)$
exists such that $u(x) = \phi(|x|)$.
If $x \ne 0$, H\"older inequality and direct computations imply that 
\[\begin{split}
|u(x)|\ &=\ |\phi(|x|)| \le \int_{|x|}^{+\infty}|\phi'(r)|dr \le 
\left(\int_{|x|}^{+\infty}|\phi'(r)|^p r^{N-1}dr\right)^{\frac1p}
\left(\int_{|x|}^{+\infty} r^{\frac{1-N}{p-1}}dr\right)^{\frac{p-1}p}\\
&\le\ \omega_{N-1}^{-\frac1p} \left(\omega_{N-1} \int_{0}^{+\infty}|\phi'(r)|^p r^{N-1}dr\right)^{\frac1p}
\left(\frac{|x|^{\frac{p-N}{p-1}}}{\frac{N-p}{p-1}}\right)^{\frac{p-1}p}
\ =\ C  \frac{|\nabla u|_p}{|x|^{\frac{N-p}{p}}},
\end{split}
\]
where $\omega_{N-1}$ is the Lebesgue measure of $\partial B_1(0)$ in $\R^{N-1}$.
\end{proof}

\begin{lemma}\label{immergo3}
If $p > 1$ then the following compact embeddings hold:
\[
 W_r^{1,p}(\R^N)  \hookrightarrow\hookrightarrow L^l(\R^N) \qquad\hbox{for any $p < l < p^*$.}
\]
\end{lemma}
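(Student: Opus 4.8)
The plan is to follow the classical Strauss compactness argument: the decay provided by the Radial Lemma controls the behaviour at infinity, while the Rellich--Kondrachov theorem on balls controls the local behaviour. Let $(u_n)_n \subset W_r^{1,p}(\R^N)$ be bounded, say $\|u_n\|_W \le C$ for all $n \in \N$. Since $p > 1$, the space $W^{1,p}(\R^N)$ is reflexive, so up to a subsequence $u_n \wk u$ weakly in $W^{1,p}(\R^N)$; the limit $u$ is again radial, as $W_r^{1,p}(\R^N)$ is a closed (hence weakly closed) subspace, and $\|u\|_W \le C$.

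Next I would exploit Lemma \ref{radiallemma}: for every $x$ with $|x| \ge 1$ one has $|u_n(x)| \le C_0 \|u_n\|_W/|x|^{\vartheta} \le C_0 C/|x|^{\vartheta}$ with $\vartheta > 0$, and the same bound for $u$. Fix $\varrho \ge 1$. Since $l > p$, on $\{|x| \ge \varrho\}$ one can estimate
\[
\int_{|x| \ge \varrho} |u_n|^l\, dx\ \le\ \Big(\frac{C_0 C}{\varrho^{\vartheta}}\Big)^{l-p} \int_{|x| \ge \varrho} |u_n|^p\, dx\ \le\ \Big(\frac{C_0 C}{\varrho^{\vartheta}}\Big)^{l-p} C^p,
\]
and analogously for $u$. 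Because $\vartheta(l-p) > 0$, the right-hand side tends to $0$ as $\varrho \to +\infty$, uniformly in $n$. This is the step which makes the radial setting special, and is really the heart of the matter: the corresponding uniform tail bound fails for general (non-radial) functions in $W^{1,p}(\R^N)$, which is exactly why the embedding $W^{1,p}(\R^N) \hookrightarrow L^l(\R^N)$ is not compact.

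Finally, fix $\eps > 0$ and choose $\varrho \ge 1$ so large that $\int_{|x| \ge \varrho} |u_n|^l\, dx < \eps$ for every $n$ and $\int_{|x| \ge \varrho} |u|^l\, dx < \eps$. On the ball $B_\varrho(0)$ the classical Rellich--Kondrachov theorem gives the compact embedding $W^{1,p}(B_\varrho(0)) \hookrightarrow\hookrightarrow L^l(B_\varrho(0))$ for $l < p^*$, so, passing to a further subsequence, $u_n \to u$ strongly in $L^l(B_\varrho(0))$, whence $\int_{B_\varrho(0)} |u_n - u|^l\, dx < \eps$ for $n$ large. Combining the interior and exterior estimates yields $\int_{\R^N} |u_n - u|^l\, dx \le c\,\eps$ for $n$ large, with $c$ independent of $\eps$. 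A standard diagonal extraction over $\varrho \in \N$ (letting $\eps \to 0$ along nested subsequences) then produces a subsequence of the original one converging to $u$ in $L^l(\R^N)$, which proves the claimed compact embedding. No serious obstacle is expected: the only delicate point is the uniform decay at infinity, and that is precisely what Lemma \ref{radiallemma} supplies.
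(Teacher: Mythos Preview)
Your argument is correct and follows the classical Strauss strategy, just as the paper does: both proofs rest on the uniform decay at infinity supplied by the Radial Lemma (Lemma~\ref{radiallemma}). The difference lies only in how the final compactness step is packaged. You split $\R^N$ into a large ball and its complement, use the tail estimate $\int_{|x|\ge\varrho}|u_n|^l\,dx\le (C_0C/\varrho^\vartheta)^{l-p}C^p$ for the exterior, and invoke the Rellich--Kondrachov theorem on $B_\varrho(0)$ for the interior. The paper instead extracts a.e.\ convergence of a subsequence and then applies the abstract Berestycki--Lions compactness lemma \cite[Theorem~A.I]{BL} with $P(\tau)=|\tau|^l$ and $Q(\tau)=|\tau|^p+|\tau|^{\tilde l}$ for some $l<\tilde l<p^*$. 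Your route is more self-contained and avoids the external reference; the paper's route is shorter once that lemma is available. Either way, the crucial ingredient is the same radial decay, and your proof is complete as written. (As a minor remark, the diagonal extraction at the end is unnecessary: once the weak limit $u$ is fixed, Rellich--Kondrachov on each $B_\varrho$ forces convergence of the \emph{whole} subsequence in $L^l(B_\varrho)$, since every further subsequence has the same limit.)
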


\begin{proof}
The proof is contained essentially in \cite[Theorem 3.2]{CS2011}, anyway, for the sake of completeness,
we give here more details. \\
Taking  $p < l < p^*$, let $(u_{n})_{n}$ be a bounded sequence in $W_r^{1,p}(\R^N)$.
By reasoning as in \cite[Theorem A.I']{BL},  
from \eqref{lemmaradiale} it follows that
$|u_{n}(x)| \to 0$ as $|x| \to +\infty$ uniformly with respect to $n$,
moreover, up to a subsequence, $(u_{n})_{n}$ converges
for a.e. $x \in \R^N$ and weakly in $W_{r}^{1,p}(\R^N)$ to a radial function $u$. 
At last, fixing $l < \tilde{l} < p^*$ and taking
$P(\tau)= |\tau|^{l}$, $Q(\tau)=|\tau|^{p}+|\tau|^{\tilde{l}}$, 
by \cite[Theorem A.I]{BL} we conclude that $u_{n} \to u$ strongly in $L^{l}(\R^N)$.
\end{proof}

\begin{remark} \label{smooth2}
Due to the assumptions $(H_5)$ and $(G_4)$, we can reduce to look for critical points of the restriction of $\J$
in \eqref{funct} to $X_r$, which we still denote as $\J$ for simplicity (see \cite{Pa}).\\
We recall that Proposition \ref{smooth1} 
implies that functional $\J$ is $C^1$ on the Banach space $X_r$, too, if also 
$(H_0)$--$(H_1)$, $(G_0)$--$(G_1)$ hold. 
\end{remark}


\section{Proof of the main result} \label{secproof}

The goal of this section is to apply Theorem \ref{mountainpass} to the functional $\J$ on $X_r$.

For simplicity, in the following proofs, when a sequence $(u_n)_n$ is involved,
we use the notation $(\eps_n)_n$
for any infinitesimal sequence depending only on $(u_n)_n$ while $(\eps_{k,n})_n$
for any infinitesimal sequence depending not only on $(u_n)_n$ but also on some fixed
integer $k$. Moreover, $c_i$ denotes any strictly positive constant independent of $n$.

In order to prove the weak Cerami--Palais--Smale condition,
we need some preliminary lemmas.

Firstly, let us point out that, while if $p > N$ the two norms $\|\cdot\|_X$ and $\|\cdot\|_W$
are equivalent, if $p \le N$ sufficient conditions are required for the boundedness of
a $W^{1,p}$--function on bounded sets as in the following result.

\begin{lemma}\label{tecnico} 
Let $\Omega$ be an open bounded domain in $\R^N$ with boundary $\partial\Omega$, 
consider $p$, $q$ so that $1 < p \le q < p^*$, $p \le N$, and take $v \in W^{1,p}(\Omega)$.
If $\gamma >0$ and $k_0\in \N$ exist such that
\[
 k_0 \ge \esssup_{\partial\Omega} v(x)
\]
and
\[
\int_{\Omega^+_k}|\nabla v|^p dx \le \gamma\left(k^q\ \meas(\Omega^+_k) +
\int_{\Omega^+_k} |v|^q dx\right)\qquad\hbox{for all $k \ge k_0$,}
\]
with $\Omega^+_k = \{x \in \Omega: v(x) > k\}$, then $\displaystyle \esssup_{\Omega} v$
is bounded from above by a positive constant which can be chosen so
that it depends only on $\meas(\Omega)$, $N$, $p$, $q$, $\gamma$, $k_0$,
$|v|_{p^*}$ ($|v|_l$ for some $l > q$ if $p^* = +\infty$). 
Vice versa, if 
\[
- k_0 \le \essinf_{\partial\Omega} v(x)
\]
and inequality
\[
\int_{\Omega^-_k}|\nabla v|^p dx \le \gamma\left(k^q\ \meas(\Omega^-_k) +
\int_{\Omega^-_k} |v|^q dx\right)\qquad\hbox{for all $k \ge k_0$,} 
\]
holds with $\Omega^-_k = \{x \in \Omega: v(x) < - k\}$, then $\displaystyle \esssup_{\Omega}(-v)$ 
is bounded from above by a positive constant which can be
chosen so that it depends only on $\meas(\Omega)$, $N$, $p$, $q$,
$\gamma$, $k_0$, $|v|_{p^*}$ ($|v|_l$ for some $l > q$ if $p^* = +\infty$).
\end{lemma}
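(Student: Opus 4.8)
The plan is to prove Lemma~\ref{tecnico} by a Stampacchia-type iteration (the classical $L^\infty$-bound argument) applied to the truncated functions $w_k := (v-k)^+$ for $k \ge k_0$, where the hypothesis $k_0 \ge \esssup_{\partial\Omega} v$ guarantees $w_k \in W^{1,p}_0(\Omega)$ (up to the usual trace-sense reading) so that Sobolev's inequality holds for $w_k$. Since $v$ deals with the sublevel business symmetrically, I will only treat $\esssup_\Omega v$: the bound on $\esssup_\Omega(-v)$ follows by replacing $v$ with $-v$ and invoking the first part verbatim, noting that all structural quantities ($\meas(\Omega), N, p, q, \gamma, k_0$ and the relevant $L^{p^*}$ or $L^l$ norm) are preserved under this replacement.

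First I would introduce, for $k \ge k_0$, the set $\Omega^+_k = \{v > k\}$, the number $a(k) := \meas(\Omega^+_k)$, and observe that $a(\cdot)$ is nonincreasing. On $\Omega^+_k$ one has $\nabla w_k = \nabla v$ and $0 \le w_k \le v$, so the structural hypothesis gives $\int_\Omega |\nabla w_k|^p dx = \int_{\Omega^+_k}|\nabla v|^p dx \le \gamma\big(k^q a(k) + \int_{\Omega^+_k}|v|^q dx\big)$. Next I would use Sobolev's inequality $|w_k|_{p^*} \le c_S |\nabla w_k|_p$ (with $c_S = c_S(N,p)$; when $p = N$ one replaces $p^*$ by any fixed $l > q$ and uses the embedding $W^{1,p}_0(\Omega)\hookrightarrow L^l(\Omega)$, whose constant then also depends on $\meas(\Omega)$) together with Hölder's inequality to pass from $\int_\Omega|\nabla w_k|^p$ to a quantity controlled by $a(k)$ and $|v|_{p^*}$. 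The essential point is to estimate the right-hand side: on $\Omega^+_k$ one has $v \le 2(v-k)^+ + 2k = 2w_k + 2k$ (using $v > k$ there), hence $\int_{\Omega^+_k}|v|^q dx \le c\big(\int_\Omega w_k^q dx + k^q a(k)\big)$, and $\int_\Omega w_k^q dx \le |w_k|_{p^*}^q\, a(k)^{1 - q/p^*}$ by Hölder since $q < p^*$. Feeding these back and absorbing, I get an inequality of the form $|w_k|_{p^*}^p \le C\big(k^q a(k) + |w_k|_{p^*}^q a(k)^{1-q/p^*}\big)$.

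To close the iteration I would compare levels $h > k \ge k_0$: since $\Omega^+_h \subset \Omega^+_k$ and $w_k \ge h - k$ on $\Omega^+_h$, one has $(h-k)^{p^*} a(h) \le \int_{\Omega^+_k} w_k^{p^*} dx = |w_k|_{p^*}^{p^*}$, so $a(h) \le (h-k)^{-p^*}|w_k|_{p^*}^{p^*}$. Combining this with the bound on $|w_k|_{p^*}^p$ from the previous step yields a recursive inequality $a(h) \le \dfrac{C' \big(h^q a(k) + \text{(lower-order in }a(k))\big)^{p^*/p \cdot (\dots)}}{(h-k)^{p^*}}$; the standard way to make this clean is to first show $a(k) \to 0$ as $k \to \infty$ (immediate since $v \in L^{p^*}$, so $k^{p^*} a(k) \le \int_{\{v>k\}}|v|^{p^*} \to 0$) and then, for $k$ beyond some explicit threshold $k_1 = k_1(\meas\Omega, N, p, q, \gamma, k_0, |v|_{p^*})$, the mixed term is dominated so that one obtains $a(h) \le \dfrac{\bar C\, h^{\beta} a(k)^{1+\delta}}{(h-k)^{p^*}}$ for some $\delta > 0$ (here $\delta = p^*/p - 1 > 0$ is where $q < p^*$ and $p \le N$, i.e. $p < p^*$, is genuinely used; the power $\beta$ comes from the $h^q$ factor) and for all $h > k \ge k_1$. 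Then the classical lemma on fast-decaying sequences (Stampacchia / De Giorgi iteration lemma applied to $\varphi(k) := a(k_1 + k\,d)$ with a suitably chosen step $d$) gives $a(K) = 0$ for some finite $K$ depending only on the listed data, i.e. $\esssup_\Omega v \le K$.

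The main obstacle I anticipate is the bookkeeping of dependencies: one must verify that every constant produced along the way — the Sobolev constant, the Hölder exponents' conjugates, the threshold $k_1$, the step $d$ of the iteration, and the final $K$ — depends only on $\meas(\Omega), N, p, q, \gamma, k_0$ and $|v|_{p^*}$ (or $|v|_l$ for a fixed $l > q$ when $p = N$), and in particular does not sneak in any further norm of $v$ or any quantitative feature of $\partial\Omega$ beyond what the boundedness of $\Omega$ provides. A secondary technical point is the case $p = N$, where $p^* = +\infty$ and one must run the entire argument with a fixed auxiliary exponent $l$ chosen in $(q, +\infty)$, paying attention that the embedding constant of $W^{1,N}_0(\Omega) \hookrightarrow L^l(\Omega)$ then legitimately depends on $\meas(\Omega)$; all the exponent arithmetic goes through with $p^*$ replaced by $l$ and $\delta = l/p - 1 > 0$.
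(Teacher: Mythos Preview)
Your proposal is correct and coincides with the approach underlying the paper's proof: the paper does not give any argument of its own but simply cites \cite[Theorem II.5.1]{LU} together with \cite[Lemma 4.5]{CP2}, and what you have written is precisely a sketch of the De Giorgi--Stampacchia level-set iteration that those references carry out. The key steps you isolate --- $w_k=(v-k)^+\in W^{1,p}_0(\Omega)$ from the boundary condition, Sobolev on $w_k$, H\"older to control $\int_{\Omega^+_k}|v|^q$, the absorption yielding $|w_k|_{p^*}^p\le C k^q a(k)$ for $k$ large via $|w_k|_{p^*}^q\le |v|_{p^*}^{q-p}|w_k|_{p^*}^p$, and the recursive decay of $a(k)$ with $\delta=p^*/p-1>0$ --- are exactly the ingredients, and your bookkeeping of the constants' dependencies is accurate. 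One small point worth tightening: because of the factor $h^\beta$ (equivalently $k^{qp^*/p}$) in the recursion, the final iteration should be run along levels of the form $k_j=K(2-2^{-j})$ (so that all $k_j\in[K,2K]$ and the prefactor stays bounded by $(2K)^\beta$), rather than along an arithmetic progression $k_1+jd$; this is the standard form of the fast-decay lemma in \cite{LU} and does not change the list of dependencies.
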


\begin{proof}
The proof follows from \cite[Theorem  II.5.1]{LU} but reasoning as in \cite[Lemma 4.5]{CP2}.
\end{proof}

A consequence of Lemma \ref{tecnico} is that the weak limit in $W^{1,p}_r(\R^N)$ of
a $(CPS)_\beta$--sequence has to be bounded in $\R^N$.

\begin{proposition}\label{tecnico3}
Let $1 < p < q < p^*$ and assume that $(H_0)$--$(H_5)$, $(G_0)$--$(G_1)$, $(G_3)$--$(G_4)$ hold.
Then, taking any $\beta \in \R$ and a $(CPS)_\beta$--sequence $(u_n)_n \subset X_r$, 
it follows that $(u_n)_n$ is bounded in $W_r^{1,p}(\R^N)$
and a constant $\beta_0 > 0$ exists such that 
\begin{equation}\label{c5bis}
|u_n(x)| \le \beta_0 \quad \mbox{for a.e. $x \in \R^N$ such that $|x| \ge 1$ and for all $\ n\in \N$.}
\end{equation}
Moreover, there exists $u \in X_r$ such that, up to subsequences, 
\begin{eqnarray}
&&u_n \rightharpoonup u\ \hbox{weakly in $W^{1,p}_r(\R^N)$,}
\label{c2}\\
&&u_n \to u\ \hbox{strongly in $L^l(\R^N)$ for each $l \in ]p,p^*[$,}
\label{c3}\\
&&u_n \to u\ \hbox{a.e. in $\R^N$,}
\label{c4}
\end{eqnarray}
if $n\to+\infty$.
\end{proposition}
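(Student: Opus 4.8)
The plan is to extract the two standard consequences of a Cerami--Palais--Smale sequence — boundedness in the Sobolev norm and a uniform decay estimate outside the unit ball — and then pass to a weak limit. First I would establish the $W^{1,p}_r$-boundedness of $(u_n)_n$. Writing $\eps_n = \|d\J(u_n)\|_{X'_r}(1+\|u_n\|_X)\to 0$, the Cerami condition gives both $\J(u_n) = \beta + \eps_n$ and $|\langle d\J(u_n),u_n\rangle| \le \eps_n$. I would then form the combination $\mu\J(u_n) - \langle d\J(u_n),u_n\rangle$ (with $\mu$ as in $(H_4)$--$(G_3)$), so that the $G$-terms are controlled via $(G_3)$ and the leading terms become
\[
\frac1p\int_{\R^N}\big((\mu-p)A(x,u_n) - A_t(x,u_n)u_n\big)|\nabla u_n|^p dx + \frac{\mu-p}{p}\int_{\R^N}|u_n|^p dx,
\]
where $(H_4)$ bounds the integrand of the first term from below by $\frac{\alpha_2}{p}A(x,u_n)|\nabla u_n|^p \ge \frac{\alpha_2\alpha_0}{p}|\nabla u_n|^p$ using $(H_2)$. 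This yields $\min\{\tfrac{\alpha_2\alpha_0}{p},\tfrac{\mu-p}{p}\}\|u_n\|_W^p \le \mu\beta + \mu\eps_n + \eps_n$, hence $\|u_n\|_W \le c_0$ for all $n$. Note that the region $|u_n|<R$ where $(H_3)$ fails causes no trouble here because $(H_4)$ is assumed for \emph{all} $t$; the only mild care is that $\langle d\J(u_n),u_n\rangle$ involves $|\nabla u_n|^p$ linearly, so no cross terms obstruct the estimate.

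Next I would deduce \eqref{c5bis} from the Radial Lemma: since $(u_n)_n$ is bounded in $W^{1,p}_r(\R^N)$ with $\|u_n\|_W \le c_0$, inequality \eqref{lemmaradiale} gives $|u_n(x)| \le C c_0 / |x|^\vartheta \le Cc_0 =: \beta_0$ for a.e. $x$ with $|x|\ge 1$, uniformly in $n$. Then \eqref{c2}, \eqref{c3}, \eqref{c4} are routine: boundedness in the reflexive space $W^{1,p}_r(\R^N)$ gives a weakly convergent subsequence $u_n \rightharpoonup u$, the compact embedding $W^{1,p}_r(\R^N)\hookrightarrow\hookrightarrow L^l(\R^N)$ for $p<l<p^*$ (Lemma \ref{immergo3}) upgrades this to strong $L^l$-convergence, and strong $L^l$-convergence yields a further subsequence converging a.e. It remains to check $u \in X_r$: the a.e.\ limit $u$ is radial and lies in $W^{1,p}_r(\R^N)$ by weak lower semicontinuity of the norm; for the $L^\infty$-bound I would again use the Radial Lemma on $u$ itself to control $u$ for $|x|\ge 1$, so the genuine issue is only the behaviour on the unit ball $B_1(0)$.

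The main obstacle is precisely showing $u \in L^\infty(B_1(0))$, and this is where Lemma \ref{tecnico} enters with $\Omega = B_1(0)$. Since $u\in W^{1,p}(B_1(0))$ and, by the already-established bound on $|x|=1$, one has $\esssup_{\partial B_1(0)} u \le \beta_0$, it suffices to verify the Caccioppoli-type inequality $\int_{\Omega^+_k}|\nabla u|^p dx \le \gamma(k^q\meas(\Omega^+_k) + \int_{\Omega^+_k}|u|^q dx)$ for $k \ge k_0 := \lceil\beta_0\rceil$ (and the symmetric one for $\Omega^-_k$). To obtain it I expect to test the \emph{limit equation} — i.e.\ use that $d\J(u)=0$, which will be proven later, or more carefully test $d\J(u_n)$ against a truncation like $(u_n-k)^+$ restricted to $B_1(0)$ and pass to the limit — exploiting $(H_2)$ to bound $\alpha_0\int|\nabla u|^p \le \int A(x,u)|\nabla u|^p$, $(H_3)$/\eqref{cabb1} to absorb the $A_t(x,u)u|\nabla u|^p$ term on $\{|u|\ge R\}$ (here $R\ge1$ matters, matching $k_0$), and $(G_1)$ to bound $\int g(x,u)u$ by $\int(a_1|u|^p + a_2|u|^q)$, with the $|u|^p$ part reabsorbed via the $|u|^p$ term in the equation. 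Then Lemma \ref{tecnico} delivers $\esssup_{B_1(0)}|u|<+\infty$, completing $u\in X_r$. The bookkeeping in that truncation argument — controlling the boundary contributions on $\partial B_1(0)$ and the sign of the $A_t$ term on the set where $|u|<R$ — is the part requiring genuine care.
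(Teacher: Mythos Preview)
Your plan coincides with the paper's proof in all essential points: the $\mu\J(u_n)-\langle d\J(u_n),u_n\rangle$ computation for the $W^{1,p}_r$--bound, the Radial Lemma for \eqref{c5bis}, the standard weak/strong/a.e.\ subsequence extraction, and Lemma \ref{tecnico} on $B_1(0)$ for the $L^\infty$--bound on $u$.

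Two points deserve correction. First, the option ``test the limit equation, i.e.\ use that $d\J(u)=0$, which will be proven later'' is circular: $d\J(u)=0$ is established only in Proposition \ref{wCPS}, whose proof \emph{uses} the present proposition. You must take the second route you sketch --- test $d\J(u_n)$ against $R^+_k u_n=(u_n-k)^+$, note $\|R^+_k u_n\|_X\le\|u_n\|_X$ so that \eqref{c1} gives $\langle d\J(u_n),R^+_k u_n\rangle\to 0$, and then pass to the limit using the weak lower semicontinuity $\int_{B_k^+}|\nabla u|^p\le\liminf_n\int_{B^+_{k,n}}|\nabla u_n|^p$ together with the strong $L^l$--convergence to handle the $h(x,u_n)R^+_k u_n$ term. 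This is exactly what the paper does. Second, your worry about ``the sign of the $A_t$ term on the set where $|u|<R$'' is unfounded: once you choose $k>\max\{R,\beta_0,\beta_1\}$ (with $\beta_1=\esssup_{|x|\ge1}|u(x)|$ from the Radial Lemma applied to $u$), the support $B^+_{k,n}$ lies in $\{u_n>k>R\}$, so $(H_3)$ applies on the whole integration domain and the decomposition $A+\frac1p A_t u_n\ge\frac{\alpha_1}{p}A$ goes through without residual terms. Your $k_0=\lceil\beta_0\rceil$ should accordingly be enlarged to exceed $R$ and $\beta_1$ as well.
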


\begin{proof}
Let $\beta \in \R$ be fixed and consider a sequence $(u_n)_n \subset X_r$
such that
\begin{equation}\label{c1}
\J(u_n) \to \beta \quad \hbox{and}\quad \|d\J(u_n)\|_{X_r'}(1 + \|u_n\|_X) \to 0\qquad
\mbox{if $\ n\to+\infty$.}
\end{equation}
From \eqref{funct}, \eqref{diff}, \eqref{c1}, $(H_4)$, $(G_3)$ and $(H_2)$ 
we have that
\[
\begin{split}
\mu\beta + \eps_n\ =\ &\mu \J(u_n) - \langle d\J(u_n),u_n\rangle =
\frac{1}p\ 
\int_{\R^N} \big( (\mu - p) A(x,u_n) - A_t(x,u_n) u_n\big) |\nabla u_n|^p dx \\
& + \ \frac{\mu - p}{p} \ \int_{\R^N} |u_n|^p dx + \int_{\R^N} (g(x,u_n) u_n - \mu G(x,u_n)) dx\\
\ge\ &\frac{\alpha_2}p\ \int_{\R^N}  A(x,u_n) |\nabla u_n|^p dx +\ \frac{\mu - p}{p}\ \int_{\R^N} |u_n|^p dx\\
\ge\ &\frac{\alpha_0\alpha_2}p\ \int_{\R^N} |\nabla u_n|^p dx +\ \frac{\mu - p}{p}\ \int_{\R^N} |u_n|^p dx\\
\ge\ &c_1\ \|u_n\|_W^p 
\end{split}
\]
with $c_1 = \frac1p \min\{\alpha_0\alpha_2, \mu - p\}$. Hence,  
$(u_n)_n$ is bounded in $W_r^{1,p}(\R^N)$ and Lemma \ref{radiallemma}
implies the uniform estimate \eqref{c5bis}. Furthermore, $u \in W_r^{1,p}(\R^N)$
exists such that \eqref{c2}--\eqref{c4} hold, up to subsequences. \\
Now, we have just to prove that $u \in L^\infty(\R^N)$.\\
As $u \in W_r^{1,p}(\R^N)$, from  Lemma \ref{radiallemma} it follows that
\begin{equation}
\label{ess0}
\beta_1\ =\ \esssup_{|x| \ge 1} |u(x)| \ < \ +\infty.
\end{equation}
Then, it is enough to prove that 
\begin{equation}
\label{ess00}
\esssup_{|x| \le 1} |u(x)| \ < \ +\infty.
\end{equation}
Arguing by contradiction, let us assume that either
\begin{equation}
\label{ess1}
\esssup_{|x| \le 1} u(x) \ = \ +\infty
\end{equation}
or
\begin{equation}
\label{ess2}
\esssup_{|x| \le 1} (- u(x)) \ = \ +\infty.
\end{equation}
If, for example, \eqref{ess1} holds 
then, for any fixed $k \in\N$,
$k > \max\{\beta_0,\beta_1,R\}$ ($R \ge 1$ as in $(H_3)$, $\beta_0$ as in \eqref{c5bis} and 
$\beta_1$ as in \eqref{ess0}), we have that
\begin{equation}\label{asp}
\meas(B^+_k) > 0 \quad\hbox{with}\quad B^+_k = \{x \in B_1(0): u(x) > k\}.
\end{equation}
We note that the choice of $k$ and \eqref{ess0} imply that
\begin{equation}\label{asp1}
B^+_k \ =\ \{x \in \R^N: u(x) > k\}.
\end{equation}
Moreover, if we set 
\[
 B^+_{k,n}\ =\ \{x \in B_1(0): u_n(x) > k\},\quad n \in \N,
\]
the choice of $k$ and \eqref{c5bis} imply that
\begin{equation}\label{asp2}
B^+_{k,n} \ =\ \{x \in \R^N: u_n(x) > k\} \qquad \hbox{for all $n \in \N$.}
\end{equation}
Now, consider the new function
$R^+_k : t \in\R \mapsto R^+_kt \in \R$ such that
\[
R^+_kt = \left\{\begin{array}{ll}
0&\hbox{if $t \le k$}\\
t-k&\hbox{if $t > k$}
\end{array}\right. .
\]
By definition and \eqref{asp1}, respectively \eqref{asp2}, it results
\begin{equation}\label{asp3}
R^+_k u(x)\ =\ \left\{\begin{array}{ll}
0&\hbox{if $x \not\in B_k^+$}\\
u(x) - k&\hbox{if $x \in B_k^+$}
\end{array}\right. ,\quad
R^+_k u_n(x)\ =\ \left\{\begin{array}{ll}
0&\hbox{if $x \not\in B_{k,n}^+$}\\
u_n(x) - k&\hbox{if $x \in B_{k,n}^+$}
\end{array}\right. ;
\end{equation}
hence, 
\begin{equation}\label{asp4}
R^+_k u \in W_0^{1,p}(B_1(0))\qquad \hbox{and}\qquad R^+_k u_n \in W_0^{1,p}(B_1(0)) \quad \hbox{for all $n \in N$.}
\end{equation}
From \eqref{c2} it follows that $R_k^+u_n \rightharpoonup R_k^+u$
weakly in $W^{1,p}_r(\R^N)$, then, from \eqref{asp4}, in $W_0^{1,p}(B_1(0))$.
As $W_0^{1,p}(B_1(0)) \hookrightarrow\hookrightarrow L^l(B_1(0))$ for any $1 \le l< p^*$, then 
\begin{equation}\label{b01}
\lim_{n\to+\infty} \int_{B_1(0)} |R_k^+u_n|^l dx\ =\  \int_{B_1(0)} |R_k^+u|^l dx \quad \hbox{for any $1 \le l< p^*$.}
\end{equation}
Moreover, from \eqref{c3} we have $u_n \to u$ strongly in $L^l(B_1(0))$ for any $l \in ]p,p^*[$
and then 
\begin{equation}\label{b011}
\lim_{n\to+\infty} \int_{B_1(0)} |u_n|^l dx\ =\  \int_{B_1(0)} |u|^l dx \quad \hbox{for any $1 \le l< p^*$.}
\end{equation}
Thus, by the sequentially weakly lower semicontinuity of the norm $\|\cdot\|_{W}$,
 we have that
\[
\int_{\R^N} |\nabla R_k^+u|^p dx + \int_{\R^N} |R_k^+u|^p dx\ \le\ 
\liminf_{n\to+\infty}\left(\int_{\R^N} |\nabla R_k^+u_n|^p dx + \int_{\R^N} |R_k^+u_n|^p dx\right),
\]
i.e., from \eqref{asp3} -- \eqref{b01} we have
\[
\begin{split}
\int_{B^+_k} |\nabla u|^p dx + \int_{B_1(0)} |R_k^+u|^p dx\ &\le\ 
\liminf_{n\to+\infty}\left(\int_{B^+_{k,n}} |\nabla u_n|^p dx + \int_{B_1(0)} |R_k^+u_n|^p dx\right)\\
&=\ \liminf_{n\to+\infty} \int_{B^+_{k,n}} |\nabla u_n|^p dx + \int_{B_1(0)} |R_k^+u|^p dx.
\end{split}
\]
Hence,
\begin{equation}\label{b1}
\int_{B^+_k} |\nabla u|^p dx \ \le\ \liminf_{n\to+\infty} \int_{B^+_{k,n}} |\nabla u_n|^p dx .
\end{equation}
On the other hand, from $\|R^+_ku_n\|_X \le \|u_n\|_X$ it follows that
\[
|\langle d\J(u_n),R^+_ku_n\rangle| \le \|d\J(u_n)\|_{X'_r}\|u_n\|_X,
\]
then \eqref{c1} and \eqref{asp} imply that $n_{k}\in \N$ exists so that
\begin{equation}\label{b2}
\langle d\J(u_n),R^+_ku_n\rangle < \meas(B^+_k) \qquad \hbox{for all $n \ge n_{k}$.}
\end{equation}
Let us point out that, being $\alpha_1 < p$ and $k >R$, assumption $(H_3)$ implies that
\[
\begin{split}
&\langle d\J(u_n),R^+_ku_n\rangle\ =\ 
\int_{B^+_{k,n}} (1 - \frac k{u_n}) \left(A(x,u_n) + \frac1p A_t(x,u_n)u_n\right) |\nabla u_n|^p dx \\
&\qquad +\ \int_{B^+_{k,n}} \frac k{u_n} A(x,u_n) |\nabla u_n|^p dx 
+ \int_{\R^N} |u_n|^{p-2}u_n R_k^+u_n dx
 - \int_{\R^N} g(x,u_n)R^+_ku_n dx\\
&\quad \ge\ \frac{\alpha_1}{p} \int_{B^+_{k,n}} A(x,u_n) |\nabla u_n|^p dx 
- \int_{\R^N} h(x,u_n) R^+_ku_n dx,
\end{split}
\]
with 
\begin{equation}\label{acca}
h(x,t)\ =\ g(x,t) - |t|^{p-2}t \quad \hbox{for a.e. $x \in \R^N$ and all $t \in \R$.}
\end{equation}
Hence, from $(H_2)$ and \eqref{asp4} it follows that
\begin{equation}\label{b3}
\begin{split}
\frac{\alpha_0 \alpha_1}{p} \int_{B^+_{k,n}} |\nabla u_n|^p dx\ &\le\
\frac{\alpha_1}{p} \int_{B^+_{k,n}} A(x,u_n) |\nabla u_n|^p dx \\
& \le\ \langle d\J(u_n),R^+_ku_n\rangle + \int_{B_1(0)} h(x,u_n) R^+_ku_n dx.
\end{split}
\end{equation}
As $(G_1)$ implies that
\begin{equation}\label{suh}
|h(x,t)|\ \le\ c_2 + c_3 |t|^{q-1} \quad \hbox{for a.e. $x \in B_1(0)$ and all $t \in \R$}
 \end{equation}
for suitable constants $c_2$, $c_3 > 0$, then from \eqref{suh} and \eqref{b01}, \eqref{b011} it follows that
\begin{equation}\label{lim1}
\lim_{n\to+\infty}\int_{B_1(0)} h(x,u_n) R^+_ku_n dx \ =\ \int_{B_1(0)} h(x,u) R^+_ku\ dx.
\end{equation}
Thus, from \eqref{b1}, \eqref{b2}, \eqref{b3} and \eqref{lim1} we obtain that
\begin{equation}\label{b4}
\frac{\alpha_0 \alpha_1}{p} \int_{B^+_{k}} |\nabla u|^p dx\ \le\
\meas(B^+_k) + \int_{B_1(0)} h(x,u) R^+_ku\ dx.
\end{equation}
From \eqref{asp3}, \eqref{suh}, \eqref{b4} and direct computations some positive constants
$c_4$, $c_5 > 0$ exist such that
\[
\int_{B^+_{k}} |\nabla u|^p dx\ \le\
c_4 \meas(B^+_k) + c_5 \int_{B^+_k} |u|^q dx.
\]
As this inequality holds for all $k > \max\{\beta_0,\beta_1,R\}$, 
Lemma \ref{tecnico} implies that \eqref{ess1} is not true. 
Thus, \eqref{ess2} must
hold. In this case, fixing any $k \in\N$, $k > \max\{\beta_0,\beta_1,R\}$, we have
\[
\meas(B^-_k) > 0,\qquad \hbox{with $B^-_k= \{x \in B_1(0): u(x) < - k\}$,}
\]
and we can consider
$R^-_k : t \in\R \mapsto R^-_kt \in \R$ such that
\[
R^-_kt = \left\{\begin{array}{ll}
0&\hbox{if $t \ge -k$}\\
t+k&\hbox{if $t <- k$}
\end{array}\right. .
\]
Thus, reasoning as above, but replacing $R^+_k$ with $R^-_k$, and again by means
of Lemma \ref{tecnico} we prove  that \eqref{ess2} cannot hold.
Hence, \eqref{ess00} has to be true.
\end{proof}

At last, by using some ideas contained in the proof of \cite[Theorem A.1]{BL}, we are able 
to state the following compactness result.

\begin{lemma}\label{tecnico2} 
Assume that $g(x,t)$ satisfies conditions $(G_0)$--$(G_2)$ and $(G_4)$, 
with  $1 < p \le q < p^*$, and consider $(w_n)_n$, $(v_n)_n \subset X_r$ and $w \in X_r$ such that
\begin{equation}\label{suglim1}
\|w_n\|_W \le M_1 \quad \hbox{for all $n \in \N$,} \qquad
w_n \to w \quad\hbox{a.e. in $\R^N$,}
\end{equation}
and
\begin{equation}\label{suglim2}
\|v_n\|_X \le M_2 \quad \hbox{for all $n \in \N$,}\qquad 
v_n \to 0 \quad\hbox{a.e. in $\R^N$,}
\end{equation}
for some constants $M_1$, $M_2 > 0$.
Then,
\begin{equation}\label{suglim}
\lim_{n\to+\infty} \int_{\R^N} g(x,w_n) v_n dx\ =\ 0.
\end{equation}
\end{lemma}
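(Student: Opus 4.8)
The plan is to split the integral into a region near the origin, where we exploit compact embedding and the subcritical growth of $g$, and a region far from the origin, where the Radial Lemma provides uniform decay of $w_n$. First I would fix $\eps > 0$; by $(G_1)$–$(G_2)$ (cf. \eqref{altoGeps} and the estimate preceding it) there is $a_\eps > 0$ such that $|g(x,t)| \le \eps |t|^{p-1} + a_\eps |t|^{q-1}$ a.e. in $\R^N$ for all $t \in \R$. Hence, by H\"older's inequality with exponents $\frac{p}{p-1}$ and $p$, respectively $\frac{q}{q-1}$ and $q$,
\[
\left| \int_{\R^N} g(x,w_n) v_n\, dx \right|\ \le\ \eps\, |w_n|_p^{p-1} |v_n|_p\ +\ a_\eps\, |w_n|_q^{q-1} |v_n|_q.
\]
Since $\|w_n\|_W \le M_1$ and $p < p^*$ (when $p < N$; the case $p = N$ is analogous using $q < +\infty$), the Sobolev embedding \eqref{Sob1} bounds $|w_n|_p$ uniformly, and $|w_n|_q$ is bounded provided $q \le p^*$; if $q = p^*$ we instead keep $|w_n|_q$ controlled via \eqref{Sob2} and $\|w_n\|_W$ together with the $L^\infty$ bound implicit in $X_r$, but in fact the hypothesis $q < p^*$ in the applications makes $|w_n|_q \le \sigma_q M_1$ directly. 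So the first term is $\le \eps\, c\, |v_n|_p$ for a constant $c$ independent of $n$, and $|v_n|_p \le \sigma_p \|v_n\|_W \le \sigma_p M_2$, making the first term $\le \eps\, c'$ uniformly in $n$. Thus it remains to show the second term tends to $0$ as $n \to \infty$ (for each fixed $\eps$), and then let $\eps \to 0$.

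For the second term, the key is $|v_n|_q \to 0$. I would prove this by the Brezis--Lieb / Berestycki--Lions truncation argument indicated in the statement (following \cite[Theorem A.1]{BL}). Since $\|v_n\|_W \le M_2$, the Radial Lemma \ref{radiallemma} gives $|v_n(x)| \le C M_2 / |x|^\vartheta$ for $|x| \ge 1$, uniformly in $n$; combined with $v_n \to 0$ a.e. and the uniform $L^\infty$ bound $|v_n|_\infty \le M_2$, this yields, for $p < q < p^*$, that $\int_{|x| \ge 1} |v_n|^q\, dx \to 0$ — indeed on $\{|x| \ge \rho\}$ the decay estimate makes the tail small uniformly in $n$ once $\rho$ is large (here one checks $q \vartheta > N$, which holds because $\vartheta = (N-p)/p$ and $q > p$ force $q\vartheta > p \cdot \frac{N-p}{p} = N - p$; to get $> N$ one uses $q$ strictly above $p$ in the subcritical regime exactly as in \cite{BL}), and on the bounded annulus $\{\rho' \le |x| \le \rho\}$ dominated convergence (dominant $M_2^q$) handles the rest; on $\{|x| \le \rho'\}$ again dominated convergence applies. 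Hence $|v_n|_q^q \to 0$, so $|v_n|_q \to 0$, and the second term $a_\eps |w_n|_q^{q-1} |v_n|_q \le a_\eps (\sigma_q M_1)^{q-1} |v_n|_q \to 0$.

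Putting the pieces together: for every $\eps > 0$,
\[
\limsup_{n \to \infty} \left| \int_{\R^N} g(x,w_n) v_n\, dx \right|\ \le\ \eps\, c',
\]
and letting $\eps \to 0$ gives \eqref{suglim}. The hypotheses \eqref{suglim1} on $(w_n)_n$ (which only needs $W$-boundedness and a.e.\ convergence, not $L^\infty$-boundedness) enter solely through the uniform bound on $|w_n|_p$ and $|w_n|_q$; the a.e.\ convergence of $w_n$ is in fact not needed for this particular estimate but is retained for uniformity with the lemma's other uses. I expect the main obstacle to be the careful bookkeeping in the tail estimate for $|v_n|_q$ — specifically verifying that the radial decay exponent $\vartheta$ together with $q > p$ really does make the exterior integral vanish uniformly in $n$, and handling cleanly the split into near-origin, annular, and exterior regions so that dominated convergence can be invoked on the bounded pieces while the decay estimate controls the unbounded piece. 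Everything else is a routine combination of H\"older, the Sobolev inequalities \eqref{Sob1}–\eqref{Sob2}, and the growth bound \eqref{altoGeps}.
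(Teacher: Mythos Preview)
Your overall strategy---use $|g(x,t)|\le \eps|t|^{p-1}+a_\eps|t|^{q-1}$, bound the first piece by $\eps\cdot\text{const}$ via H\"older, and kill the second by showing $|v_n|_q\to 0$---is different from the paper's and would work when $p<q$, but the step you flag as ``the main obstacle'' is in fact wrong as written. You assert that the tail $\int_{|x|\ge\rho}|v_n|^q\,dx$ is small because $q\vartheta>N$; however with $\vartheta=(N-p)/p$ one has $q\vartheta>N\iff q>p^*$, which is the \emph{opposite} of the subcritical hypothesis $q<p^*$. The correct tail control is by interpolation: for $|x|\ge\rho$ the Radial Lemma gives $|v_n(x)|\le C M_2\rho^{-\vartheta}$, hence $|v_n(x)|^q\le (C M_2\rho^{-\vartheta})^{\,q-p}\,|v_n(x)|^p$, and integrating against the uniform $L^p$ bound yields a quantity that tends to $0$ as $\rho\to\infty$ uniformly in $n$---this is where $q>p$ is genuinely used. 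Equivalently, you can simply invoke Lemma~\ref{immergo3}: $(v_n)_n$ bounded in $W^{1,p}_r$ with $v_n\to0$ a.e.\ forces (via reflexivity and the compact embedding) $v_n\to 0$ in $L^q$ for every $p<q<p^*$.

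Two further points. First, your route does not cover the endpoint $q=p$ permitted by the lemma: there the $\eps$--split is vacuous and $|v_n|_p$ need not tend to $0$ (take radial dilations $v_n(x)=n^{-N/p}\phi(x/n)$). The paper handles this by a different decomposition: it applies the Radial Lemma to $w_n$ rather than $v_n$, using $(G_2)$ to make $|g(x,w_n)|\le\eps|w_n|^{p-1}$ on $\{|x|>R_\eps\}$ where $|w_n|$ is uniformly small, and on the remaining bounded ball it proves equi-integrability of $g(x,w_n)v_n$ (via the $L^{p^*}$ bound on $w_n$ and $|v_n|_\infty\le M_2$) and concludes by Vitali, using the a.e.\ convergence $g(x,w_n)v_n\to 0$---which is where the hypothesis $w_n\to w$ a.e.\ actually enters. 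Second, once the tail argument is repaired, your approach for $p<q$ is a legitimate and somewhat more elementary alternative (no Vitali, no use of the a.e.\ limit of $w_n$), and it suffices for the application in Proposition~\ref{wCPS}, where $p<q$ is assumed.
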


\begin{proof}
Firstly, we note that from \eqref{suglim1}, \eqref{suglim2} and $(G_0)$ it follows that
\begin{equation}\label{qo}
g(x,w_n) v_n \to 0\quad \hbox{a.e. in $\R^N$.}
\end{equation}
Now, fixing any bounded Borel set $B$, we prove that 
\begin{equation}\label{suglim3}
\lim_{n\to+\infty} \int_{B} g(x,w_n) v_n dx\ =\ 0.
\end{equation}
In fact, from $(G_1)$ and applying twice Young inequality once with $\frac{p}{p-1}$ and 
its conjugate exponent $p$ and once with $\frac{p^*}{q-1}$
and its conjugate exponent $\frac{p^*}{p^* - q+1}$ if $p <N$ (otherwise,
if $p=N$ it is enough to replace $p^*$ with any $l > q$), the 
estimate in \eqref{suglim2} implies
\begin{equation}\label{suglim5}
\begin{split}
|g(x,w_n)v_n|\ &\le\ (a_1 |w_n|^{p-1} + a_2 |w_n|^{q-1}) |v_n|_\infty \le c_1 + c_2 |w_n|^{p} + c_3 |w_n|^{p^*}\\
&\le\ c_4 + c_5 |w_n|^{p^*}
\qquad\hbox{a.e. in $\R^N$,}
\end{split}
\end{equation}
for suitable constants $c_i > 0$. 
Then, considering the function $\varphi(k) = \left(\frac{k-c_4}{c_5}\right)^{\frac{1}{p^*}}$
if $k > c_4$, we have that
\begin{equation}\label{suglim7}
\varphi(k) \to +\infty \quad \hbox{if $k\to+\infty$}.
\end{equation}
From \eqref{suglim5} it results
\[
C_{k,n} \subset D_{k,n} \quad \hbox{with}\quad
C_{k,n} = \{x \in \R^N: |g(x,w_n) v_n| \ge k\}, \quad
D_{k,n} = \{x \in \R^N: |w_n| \ge \varphi(k)\},
\]
thus,
\begin{equation}\label{suglim6}
\int_{B\cap C_{k,n}} |g(x,w_n) v_n| dx\ \le\ \int_{B\cap D_{k,n}} |g(x,w_n) v_n| dx\qquad \hbox{for all $k > c_4$, $n \in \N$.}
\end{equation}
We note that, again, from $(G_1)$ it follows that
\[
\frac{|g(x,t)|}{|t|^{p^*}} \ \le\ \frac{a_1 |t|^{p-1} + a_2 |t|^{q-1}}{|t|^{p^*}}\ \to\ 0\quad
\hbox{a.e. in $\R^N$ if $|t| \to +\infty$.}
\] 
Therefore, from \eqref{suglim2} and \eqref{suglim7}, fixing any $\eps > 0$ a constant $k_\eps > 0$
exists such that 
\[
|g(x,w_n) v_n|\ \le \ \eps M_2 |w_n|^{p^*} \quad \hbox{for all $x \in D_{k,n}$, $k \ge k_\eps$, $n \in \N$;}
\]
hence, \eqref{Sob1}, \eqref{suglim1} and \eqref{suglim6} imply that
\[
\int_{B\cap C_{k,n}} |g(x,w_n) v_n| dx\ \le\ \eps c_7\qquad \hbox{for all $k > k_\eps$, $n \in \N$,}
\]
where $c_7 > 0$ is independent of $n$, $\eps$ and $k$.\\
So, the sequence of functions $(g(x,w_n) v_n)_n$ is equi--integrable on $B$ (see, e.g., \cite[Definition 21.1]{Bau}); 
thus, \eqref{suglim3} follows from \eqref{qo} and \cite[Theorems 20.5 and 21.4]{Bau}.\\
On the other hand, from $(G_2)$, fixing any $\eps > 0$ a constant $\delta_\eps > 0$ exists such that
\[
|g(x,t)| < \eps |t|^{p-1}\quad \hbox{a.e. in $\R^N$ if $|t| < \delta_\eps$,}
\]
then, from Lemma \ref{radiallemma} and \eqref{suglim1} a radius $R_\eps >0$ exists 
such that
\[
|w_n(x)| < \delta_\eps\quad \hbox{if $|x| > R_\eps$, for any $n \in \N$;}
\]
whence,
\[
|g(x,w_n) v_n|\ \le\ \eps |w_n|^{p-1} |v_n| \quad \hbox{if $|x| > R_\eps$, for any $n \in \N$.}
\]
Thus, from H\"older inequality and \eqref{suglim1}, \eqref{suglim2} it follows that 
\[
\int_{\R^N\setminus B_{R_\eps}(0)} |g(x,w_n) v_n| dx\ \le\ \eps c_8\qquad \hbox{for all $n \in \N$,}
\]
with $c_8 = M_1^{p-1}M_2 > 0$; so \eqref{suglim} follows from this last estimate and \eqref{suglim3} with $B = B_{R_\eps}(0)$. 
\end{proof}

Now, we are ready to prove the $(wCPS)$ condition in $\R$ by adapting the arguments developed 
in \cite[Proposition 3.4]{CP1}, or also \cite[Proposition 4.6]{CP2}, 
to our setting in the whole space $\R^N$.

\begin{proposition}\label{wCPS}
If $1 < p < q < p^*$ and $(H_0)$--$(H_5)$, $(G_0)$--$(G_4)$ hold, then 
functional $\J$ satisfies the weak Cerami--Palais--Smale condition in $X_r$
at each level $\beta \in \R$.
\end{proposition}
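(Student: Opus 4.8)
The plan is to combine Proposition \ref{tecnico3} with the two--norm bookkeeping of \cite{CP1,CP2}. Fix $\beta\in\R$ and let $(u_n)_n\subset X_r$ be a $(CPS)_\beta$--sequence. Proposition \ref{tecnico3} already supplies what is needed to start: $(u_n)_n$ is bounded in $W^{1,p}_r(\R^N)$, it satisfies the uniform exterior estimate \eqref{c5bis}, and, along a subsequence, it converges to some $u\in X_r$ in the senses \eqref{c2}--\eqref{c4}. This $u$ is the candidate in Definition \ref{wCPSdef}, so two facts remain: (i) $\|u_n-u\|_W\to0$ and (ii) $d\J(u)=0$ with $\J(u)=\beta$.

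The device that makes (i) work --- and the one place where the Cerami condition is genuinely stronger than Palais--Smale --- is that, although $(\|u_n\|_X)_n$ may be unbounded, the weight $(1+\|u_n\|_X)$ allows us to test $d\J(u_n)$ against any function whose $X$--norm is $O(1+\|u_n\|_X)$ and still obtain $o(1)$. In particular $\langle d\J(u_n),u_n\rangle=o(1)$, $\langle d\J(u_n),u\rangle=o(1)$, and, for each fixed $k$, $\langle d\J(u_n),R^+_ku_n\rangle=o(1)$ and $\langle d\J(u_n),R^-_ku_n\rangle=o(1)$, with $R^\pm_k$ the truncations from the proof of Proposition \ref{tecnico3} (recall $\|R^\pm_ku_n\|_X\le\|u_n\|_X$). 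First I would rerun the $R^+_k$/$R^-_k$ computation of Proposition \ref{tecnico3}, this time \emph{keeping} the gradient term rather than discarding it: using $(H_2)$, $(H_3)$ and $(G_1)$, together with the fact that by \eqref{c5bis} the super--level set $\{|u_n|>k\}$ lies inside the fixed ball $B_1(0)$ once $k>\beta_0$, and the compact embedding $W^{1,p}(B_1(0))\hookrightarrow\hookrightarrow L^q(B_1(0))$, I expect to get
\[
\lim_{k\to+\infty}\ \limsup_{n\to+\infty}\ \int_{\{|u_n|>k\}} A(x,u_n)\,|\nabla u_n|^p\,dx\ =\ 0 ,
\]
hence the same with $|\nabla u_n|^p$ in place of $A(x,u_n)|\nabla u_n|^p$ by $(H_2)$. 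This equi--integrability of $|\nabla u_n|^p$ over the super--level sets quarantines the possible $L^\infty$--blow--up inside $B_1(0)$.

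With this in hand, (i) reduces to a local statement: on $\R^N\setminus B_1(0)$ the difference $u_n-u$ is bounded in $L^\infty$ by $\beta_0+|u|_\infty$, so there the argument is exactly the bounded--domain one of \cite{CP1,CP2}, while the interior is controlled by the previous step. Concretely, I would test $d\J(u_n)$ with the truncation at height $k$ of $u_n-u$ (radial, with $X$--norm $\le\|u_n-u\|_W+k$, hence bounded for fixed $k$), expand \eqref{diff}, and let $n\to+\infty$ and then $k\to+\infty$: the $p$--Laplacian principal part is handled by its strict monotonicity and the coercivity $A(x,u_n)\ge\alpha_0$ of $(H_2)$; the $A_t$--term of \eqref{diff} splits into a piece on $\{|u_n|\le k\}$, where $A_t(x,u_n)$ is essentially bounded by $(H_1)$, and a piece on $\{|u_n|>k\}$ controlled by the step above together with \eqref{cabb1}; the lower--order term $\int|u_n|^{p-2}u_n(u_n-u)\,dx$ appears with a favourable sign, its off--diagonal companion $\int|u|^{p-2}u\,(u_n-u)\,dx$ tending to $0$ by weak $L^p$--convergence (which, combined with that sign and the equi--integrability of $|u_n|^p$ on $B_1(0)$, also yields $\int_{\R^N}|u_n-u|^p\,dx\to0$, a fact not contained in \eqref{c3}); and $\int g(x,u_n)(u_n-u)\,dx$ is killed using \eqref{c3}, \eqref{altoGeps} and Lemma \ref{tecnico2} on bounded sets. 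This gives $\nabla u_n\to\nabla u$ a.e.\ and then, with the tails controlled by the super--level estimate, $\int_{\R^N}|\nabla u_n-\nabla u|^p\,dx\to0$, i.e.\ $\|u_n-u\|_W\to0$.

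For (ii), once $\|u_n-u\|_W\to0$ and (along a further subsequence) $\nabla u_n\to\nabla u$ a.e., I pass to the limit in $\langle d\J(u_n),\varphi\rangle$ for every fixed radial $\varphi\in C^\infty_0(\R^N)$: on the compact $\supp\varphi$ the coefficients $A(x,u_n)$, $A_t(x,u_n)$, $g(x,u_n)$ converge a.e.\ and the relevant powers of $u_n$, $\nabla u_n$ are equi--integrable there by \eqref{c3}, the strong $W^{1,p}$--convergence, \eqref{alto}, \eqref{cabb1}, $(G_1)$ and Vitali's theorem, while $\langle d\J(u_n),\varphi\rangle=o(1)$ since $\|d\J(u_n)\|_{X'_r}\to0$; hence $\langle d\J(u),\varphi\rangle=0$, and by density $d\J(u)=0$ in $X'_r$ (equivalently, by symmetric criticality, $u$ is a weak bounded radial solution of \eqref{euler}). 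Finally $\J(u)=\beta$ follows from $\J(u_n)\to\beta$ together with $\int|u_n|^p\to\int|u|^p$, $\int G(x,u_n)\to\int G(x,u)$ (by \eqref{altoGeps}, \eqref{c3} and the $L^p$--convergence above) and $\int A(x,u_n)|\nabla u_n|^p\to\int A(x,u)|\nabla u|^p$ (a.e.\ convergence of the integrand, $(H_1)$ on $\{|u_n|\le k\}$ and the super--level estimate on $\{|u_n|>k\}$, again via Vitali). The main obstacle throughout is precisely that a $(CPS)_\beta$--sequence need not be bounded in $L^\infty(\R^N)$, so Proposition \ref{smooth1} cannot be applied to $(u_n)_n$ itself; the whole strategy is designed to confine the possible blow--up to the fixed ball $B_1(0)$ via \eqref{c5bis} and then annihilate its contribution through the $R^\pm_k$--estimate, which is where the Cerami weight $(1+\|u_n\|_X)$ and the structural assumptions $(H_2)$--$(H_3)$ are indispensable.
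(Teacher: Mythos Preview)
Your plan is close to the paper's, and your handling of the super--level sets and of part (ii) is sound (in fact the super--level estimate holds in the stronger form $\lim_{n}\int_{\{|u_n|>k\}}A(x,u_n)|\nabla u_n|^p\,dx=0$ for each fixed $k>\max\{|u|_\infty,\beta_0,R\}$, cf.\ \eqref{b113}). The gap is in Step~(i), at the sentence ``the $A_t$--term \ldots splits into a piece on $\{|u_n|\le k\}$, where $A_t(x,u_n)$ is essentially bounded by $(H_1)$''. Boundedness is not enough here: if $v$ is your truncation of $u_n-u$ at height $k$, that piece is only dominated by $\tfrac{c_k}{p}\int_{\{|u_n|\le k\}}|v|\,|\nabla u_n|^p\,dx$, and since $|v|\le k$ while $|\nabla u_n|^p$ is merely bounded in $L^1$ --- you have no equi--integrability of the gradients \emph{away} from the super--level sets --- this term neither tends to $0$ nor can it be absorbed into the principal part: the combined coefficient $A(x,u_n)+\tfrac1p A_t(x,u_n)\,v$ in front of $|\nabla u_n|^p$ has no sign once $k$ is large. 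This is precisely the obstruction for which the paper invokes the Arcoya--Boccardo test function $\psi(t)=t\,e^{\eta t^2}$ of \cite{AB1}: testing against $\psi(v_{k,n})$, $v_{k,n}=T_ku_n-u$, the inequality \eqref{psi1}, namely $\alpha\psi'-\beta|\psi|>\alpha/2$, forces $\psi'(v_{k,n})A(x,T_ku_n)-\tfrac{c_k}{p}|\psi(v_{k,n})|\ge c>0$, so the $A_t$--contribution is genuinely absorbed. To make this run cleanly the paper does \emph{not} test $d\J(u_n)$ directly; it first shows (its Step~1) that the truncated sequence $(T_ku_n)_n$, which is uniformly bounded in $L^\infty$, itself satisfies \eqref{pp}--\eqref{p2}, and only then applies the $\psi$--trick to $d\J(T_ku_n)$.

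A minor point on (ii): once $\|T_ku_n-u\|_W\to0$ with $|T_ku_n|_\infty\le k$, Proposition~\ref{smooth1} applies to $(T_ku_n)_n$ and gives $\J(T_ku_n)\to\J(u)$ and $d\J(T_ku_n)\to d\J(u)$ in $X'_r$ in one line; combined with \eqref{pp}--\eqref{p2} this yields $\J(u)=\beta$ and $d\J(u)=0$ without the Vitali bookkeeping you sketch.
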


\begin{proof}
Let $\beta \in \R$ be fixed and consider a sequence $(u_n)_n \subset X_r$
such that \eqref{c1} holds.
By applying Proposition \ref{tecnico3} the uniform estimate \eqref{c5bis} holds 
and there exists $u \in X_r$ such that,
up to subsequences, \eqref{c2}--\eqref{c4} are satisfied.\\
For simplicity, the last part of the proof is divided in the following three steps:
\begin{itemize}
\item[1.] defining $T_k : \R \to \R$ such that
\begin{equation}\label{troncok}
T_kt = \left\{\begin{array}{ll}
t&\hbox{if $|t| \le k$}\\
k\frac t{|t|}&\hbox{if $|t| > k$}
\end{array}\right. ,
\end{equation}
with $k \ge \max\{|u|_\infty, R, \beta_0\} + 1$ ($R \ge 1$ as in $(H_3)$ and $\beta_0$ as in \eqref{c5bis}),
then, as $n \to +\infty$, we have
\begin{equation}\label{pp}
\J(T_ku_n) \to \beta 
\end{equation}
and 
\begin{equation}\label{p2}
\|d\J(T_ku_n)\|_{X_r'} \to 0;
\end{equation}
\item[2.] $\|u_n - u\|_{W} \to 0$ if $n\to+\infty$, as
\begin{equation}\label{eq4}
\|T_ku_n - u\|_{W} \to 0 \qquad \hbox{as $n \to +\infty$;}
\end{equation}
\item[3.] $\J(u) = \beta$ and $d\J(u) = 0$.
\end{itemize}
\smallskip

\noindent
{\sl Step 1.} 
Taking any $k > \max\{|u|_\infty, R, \beta_0\}$,
if we set 
\begin{equation}\label{asp511}
 B_{k,n}\ =\ \{x \in B_1(0): |u_n(x)| > k\},\quad n \in \N,
\end{equation}
the choice of $k$ and \eqref{c5bis} imply that
\begin{equation}\label{asp5}
B_{k,n} \ =\ \{x \in \R^N: |u_n(x)| > k\} \qquad \hbox{for all $n \in \N$.}
\end{equation}
Then, from \eqref{troncok} and \eqref{asp5} we have that
\begin{equation}\label{asp51}
T_k u_n(x)\ =\ \left\{\begin{array}{ll}
u_n(x)&\hbox{for a.e. $x \not\in B_{k,n}$}\\
k\frac{u_n(x)}{|u_n(x)|}&\hbox{if $x \in B_{k,n}$}
\end{array}\right. 
\end{equation}
and
\[
|T_ku_n|_\infty \le k, \quad \|T_ku_n\|_{W} \le \|u_n\|_{W}\qquad \hbox{for each $n \in \N$.}
\]
Now, we define $R_k : \R \to \R$ such that
\[
R_kt = t - T_kt = \left\{\begin{array}{ll}
0&\hbox{if $|t| \le k$}\\
t-k\frac t{|t|}&\hbox{if $|t| > k$}
\end{array}\right..
\]
From \eqref{asp51} it results
\begin{equation}\label{asp51bis}
R_k u_n(x)\ =\ \left\{\begin{array}{ll}
0&\hbox{for a.e. $x \not\in B_{k,n}$}\\
u_n(x) - k\frac{u_n(x)}{|u_n(x)|}&\hbox{if $x \in B_{k,n}$}
\end{array}\right. ;
\end{equation}
hence, \eqref{asp511} implies that
\begin{equation}\label{asp7}
R_k u_n \in W_0^{1,p}(B_1(0)) \qquad \hbox{for all $n \in N$.}
\end{equation}
Since $k > |u|_\infty$, we have that 
\[
T_ku(x) = u(x) \quad \hbox{and}\quad R_ku(x) = 0 \qquad \hbox{for a.e. $x \in \R^N$;} 
\]
thus, from \eqref{c2} it follows that $R_ku_n \rightharpoonup 0$
weakly in $W^{1,p}_r(\R^N)$, and, from \eqref{asp7}, in $W_0^{1,p}(B_1(0))$.
Since $W_0^{1,p}(B_1(0)) \hookrightarrow\hookrightarrow L^l(B_1(0))$ for any $1 \le l< p^*$, then 
from \eqref{asp511} and \eqref{asp51bis} we have that
\begin{equation}\label{b012}
\lim_{n\to+\infty} \int_{\R^N} |R_ku_n|^l dx\ =\  0 \quad \hbox{for any $1 \le l< p^*$.}
\end{equation}
On the other hand, reasoning as in the proof of \eqref{b3} but replacing $R^+_ku_n$
with $R_ku_n$ we obtain 
\begin{equation}\label{b8}
\begin{split}
\frac{\alpha_0 \alpha_1}{p} \int_{B_{k,n}} |\nabla u_n|^p dx\ &\le\
\frac{\alpha_1}{p} \int_{B_{k,n}} A(x,u_n) |\nabla u_n|^p dx \\
& \le\ \langle d\J(u_n),R_ku_n\rangle + \int_{B_1(0)} h(x,u_n) R_ku_n dx.
\end{split}
\end{equation}
We note that, from \eqref{suh}, H\"older inequality, \eqref{c3}  and \eqref{b012} it follows that
\begin{equation}\label{lim12}
\lim_{n\to+\infty}\int_{B_1(0)} h(x,u_n) R_ku_n dx \ =\ 0,
\end{equation}
while \eqref{c1} implies that
\begin{equation}\label{lim13}
\lim_{n\to+\infty}|\langle d\J(u_n),R_ku_n\rangle|\ =\ 0
\end{equation}
as $\|R_ku_n\|_X \le \|u_n\|_X$. 
Thus, summing up, from \eqref{b8}--\eqref{lim13} we obtain that
\begin{equation}\label{b111}
 \lim_{n\to+\infty} \int_{B_{k,n}} |\nabla u_n|^p dx\ =\ 0
\end{equation}
and 
\begin{equation}\label{b113}
\lim_{n\to+\infty} \int_{B_{k,n}} A(x,u_n) |\nabla u_n|^p dx \ =\ 0.
\end{equation}
Hence, from \eqref{b012} and \eqref{b111} it follows that
\begin{equation}\label{b112}
 \lim_{n\to+\infty} \|R_k u_n\|_W\ =\ 0.
\end{equation}
Moreover, from \eqref{c4}, \eqref{asp511} and $k > |u|_\infty$ we obtain
\begin{equation}
\label{lim6}
\lim_{n\to+\infty} \meas(B_{k,n})\ =\ 0,
\end{equation}
which, together with \eqref{b011}, implies 
\begin{equation}
\label{lim7}
\lim_{n\to+\infty} \int_{B_{k,n}} |u_n|^l dx\ =\ 0\qquad \hbox{for any $1 \le l< p^*$.}
\end{equation}
Now, we prove that \eqref{pp} holds. 
In fact, from \eqref{funct} and \eqref{asp51} we have
\begin{equation}\label{vai}
\begin{split}
\J(T_ku_n)\  =\ &\frac1p \int_{\R^N \setminus B_{k,n}} A(x,u_n) |\nabla u_n|^p dx
 + \frac1p \int_{\R^N \setminus B_{k,n}} |u_n|^p dx
+ \frac{k^p}p \meas(B_{k,n})\\
& - \int_{\R^N} G(x,T_ku_n) dx\\
=\ & \J(u_n) - \frac1p \int_{B_{k,n}} A(x,u_n) |\nabla u_n|^p dx
 - \frac1p \int_{B_{k,n}} |u_n|^p dx
+ \frac{k^p}p \meas(B_{k,n})\\
& - \int_{\R^N} (G(x,T_ku_n) - G(x,u_n)) dx.
\end{split}
\end{equation}
Thus, \eqref{alto3}, \eqref{asp51}, \eqref{lim6} and \eqref{lim7} imply that
\begin{equation}
\label{lim8}
\lim_{n\to+\infty} \int_{\R^N} (G(x,T_ku_n) - G(x,u_n)) dx\ =\ 0.
\end{equation}
Then, \eqref{pp} follows from \eqref{c1}, \eqref{b113}, \eqref{lim6} -- \eqref{lim8}. \\
In order to prove \eqref{p2}, we take $v \in X_r$ such that $\|v\|_X = 1$; hence,
$|v|_\infty \le 1$, $\|v\|_{W} \le 1$.
From \eqref{diff} and \eqref{asp51} we have that
\[
\begin{split}
&\langle d\J(T_ku_n),v\rangle\  =\  \int_{\R^N \setminus B_{k,n}} A(x,u_n) |\nabla u_n|^{p-2}\nabla u_n \cdot \nabla v dx
 + \frac1p \int_{\R^N \setminus B_{k,n}} A_t(x,u_n) v |\nabla u_n|^{p}dx\\
&\qquad + \int_{\R^N \setminus B_{k,n}} |u_n|^{p-2}u_n v dx + k^{p-1} \int_{B_{k,n}} \frac{u_n}{|u_n|} v dx
- \int_{\R^N} g(x,T_ku_n) v dx\\
&\quad =\  \langle d\J(u_n),v\rangle - \int_{B_{k,n}} A(x,u_n) |\nabla u_n|^{p-2}\nabla u_n \cdot \nabla v dx
 - \frac1p \int_{B_{k,n}} A_t(x,u_n) v |\nabla u_n|^{p}dx\\
&\qquad - \int_{B_{k,n}} |u_n|^{p-2}u_n v dx + k^{p-1} \int_{B_{k,n}} \frac{u_n}{|u_n|} v dx
+ \int_{B_{k,n}} (g(x,u_n) - g(x,T_ku_n)) v dx,
\end{split}
\]
where from \eqref{c1} it follows
\[
|\langle d\J(u_n),v\rangle| \le  \|d\J(u_n)\|_{X_r'}\ \to\ 0,
\]
\eqref{cabb1} and \eqref{b113} imply
\[
\left|\int_{B_{k,n}} A_t(x,u_n) v |\nabla u_n|^{p}dx \right|\ \le\
\frac{\alpha_3}R\int_{B_{k,n}}A(x,u_n)|\nabla u_n|^p dx\ \to\ 0,
\]
from \eqref{lim7} and H\"older inequality we have
\[
\left|\int_{B_{k,n}} |u_n|^{p-2}u_n v dx\right|\ \le\
 \left(\int_{B_{k,n}} |u_n|^{p} dx\right)^{\frac{p-1}{p}}\ \to\ 0,
\]
\eqref{lim6} implies 
\[
\left|\int_{B_{k,n}} \frac{u_n}{|u_n|} v dx\right| \le \meas(B_{k,n}) \ \to\ 0
\]
and from $(G_1)$, \eqref{asp51}, \eqref{lim6}, \eqref{lim7}
and H\"older inequality it follows that
\[
\begin{split}
\left|\int_{B_{k,n}} (g(x,u_n) - g(x,T_ku_n)) v dx\right|\ \le\ &a_1 
\left(\int_{B_{k,n}} |u_n|^{p} dx\right)^{\frac{p-1}{p}} 
+ a_2 \left(\int_{B_{k,n}} |u_n|^{q} dx\right)^{\frac{q-1}{q}}\\
&+ (a_1 k^{p-1} + a_2 k^{q-1})\ \meas(B_{k,n}) \quad \to\ 0.
\end{split}
\]
Thus, summing up, we obtain 
\begin{equation}\label{eq1}
|\langle d\J(T_ku_n),v\rangle| \le \eps_{k,n} + \left|\int_{B_{k,n}} A(x,u_n) |\nabla u_n|^{p-2}\nabla u_n \cdot \nabla v dx\right|.
\end{equation}
Now, in order to estimate the last integral in \eqref{eq1}, following 
the notations introduced in the proof of Proposition \ref{tecnico3}, let us
consider the set $B^+_{k,n}$ and the test function defined as
\[
\varphi^+_{k,n} = v R^+_{k}u_n.
\]
By definition, we have
\[
\|\varphi^+_{k,n}\|_X \le 2 \|u_n\|_X,
\]
and, thus, \eqref{c1} implies 
\[
\|d\J(u_n)\|_{X_r'}\|\varphi^+_{k,n}\|_X\ \le\ \eps_n.
\]
From definition \eqref{acca}, \eqref{asp3} and direct computations we note that
\[
\begin{split}
\langle d\J(u_n),\varphi^+_{k,n}\rangle\ =\ & \int_{B^+_{k,n}} A(x,u_n) R^+_{k}u_n |\nabla u_n|^{p-2}
\nabla u_n \cdot \nabla v dx +  \int_{B^+_{k,n}} A(x,u_n)v|\nabla u_n|^p dx\\
&+ \frac1p \int_{B^+_{k,n}}A_t(x,u_n)v R^+_{k}u_n|\nabla u_n|^p dx - \int_{B^+_{k,n}} h(x,u_n)v R^+_{k}u_n dx,
\end{split}
\]
where, since $B_{k,n}^+ \subset B_{k,n}$, from \eqref{lim6} it follows that
\[
\lim_{n\to+\infty} \meas(B_{k,n}^+)\ =\ 0,
\]
while \eqref{b113}, \eqref{cabb}, \eqref{suh}, \eqref{lim7} and direct computations imply
\[
\left|\int_{B^+_{k,n}} A(x,u_n)v|\nabla u_n|^p dx\right|\ \le\  \int_{B_{k,n}^+} A(x,u_n) |\nabla u_n|^p dx \ \to\ 0,
\]
\[
\left|\int_{B^+_{k,n}}A_t(x,u_n)v R^+_{k}u_n|\nabla u_n|^p dx\right|\ \le\  
\alpha_3 \int_{B_{k,n}^+} A(x,u_n) |\nabla u_n|^p dx \ \to\ 0,
\]
\[
\left|\int_{B^+_{k,n}} h(x,u_n)v R^+_{k}u_n dx\right| \le \int_{B_{k,n}^+} |h(x,u_n)|u_n dx 
\le c_2\int_{B_{k,n}^+} u_n dx + c_3\int_{B_{k,n}^+} |u_n|^q dx\ \to\ 0.
\]
Hence, summing up, we have
\begin{equation}\label{bbis}
\left|\int_{B^+_{k,n}} A(x,u_n) R^+_{k}u_n |\nabla u_n|^{p-2}
\nabla u_n \cdot \nabla v dx\right|\ \le\  \eps_{k,n}.
\end{equation}
Now, if we fix $k > \max\{|u|_\infty, R, \beta_0\} + 1$, all the previous computations hold also 
for $k-1$ and then in particular, \eqref{b111} and \eqref{bbis} become
\begin{equation}\label{b111bis}
 \lim_{n\to+\infty} \int_{B_{k-1,n}} |\nabla u_n|^p dx\ =\ 0
\end{equation}
and
\begin{equation}\label{bbis1}
\left|\int_{B^+_{k-1,n}} A(x,u_n) R^+_{k-1}u_n |\nabla u_n|^{p-2}
\nabla u_n \cdot \nabla v dx\right|\ \le\  \eps_{k,n}.
\end{equation}
Since $B^+_{k,n} \subset B^+_{k-1,n}$, then 
\[
\begin{split}
&\int_{B^+_{k-1,n}} A(x,u_n) R^+_{k-1}u_n |\nabla u_n|^{p-2}
\nabla u_n \cdot \nabla v dx\ =\ \int_{B^+_{k,n}} A(x,u_n) R^+_{k-1}u_n |\nabla u_n|^{p-2}
\nabla u_n \cdot \nabla v dx\\
&\qquad \qquad  + 
\int_{B^+_{k-1,n} \setminus B^+_{k,n}} A(x,u_n) R^+_{k-1}u_n |\nabla u_n|^{p-2}
\nabla u_n \cdot \nabla v dx\\
&\quad\qquad =\ \int_{B^+_{k,n}} A(x,u_n) R^+_{k}u_n |\nabla u_n|^{p-2}
\nabla u_n \cdot \nabla v dx + \int_{B^+_{k,n}} A(x,u_n) |\nabla u_n|^{p-2}
\nabla u_n \cdot \nabla v dx\\
&\qquad\qquad +
\int_{B^+_{k-1,n} \setminus B^+_{k,n}} A(x,u_n) R^+_{k-1}u_n |\nabla u_n|^{p-2}
\nabla u_n \cdot \nabla v dx,
\end{split}
\]
where $(H_1)$, \eqref{asp3}, the properties of $B^+_{k-1,n} \setminus B^+_{k,n}$, H\"older inequality and 
\eqref{b111bis} imply
\[\begin{split}
\left|\int_{B^+_{k-1,n} \setminus B^+_{k,n}} A(x,u_n) R^+_{k-1}u_n |\nabla u_n|^{p-2}
\nabla u_n \cdot \nabla v dx\right|\ &\le\ c_6 \int_{B^+_{k-1,n} \setminus B^+_{k,n}} |\nabla u_n|^{p-1}
|\nabla v| dx \\
&\le\ c_6 \left(\int_{B^+_{k-1,n}} |\nabla u_n|^{p} dx\right)^{\frac{p-1}p} \ \to\ 0.
\end{split}
\]
Thus, from \eqref{bbis} and \eqref{bbis1} it follows that
\[
\left|\int_{B^+_{k,n}} A(x,u_n) |\nabla u_n|^{p-2}
\nabla u_n \cdot \nabla v dx\right|\ \le\ \eps_{k,n}.
\]
Similar arguments apply also if we consider $B^-_{k,n}$
and the test functions
\[
\varphi^-_{k,n} = v R^-_{k}u_n,\quad \varphi^-_{k-1,n} = v R^-_{k-1}u_n;
\]
hence, we have
\begin{equation}\label{eq3}
\left|\int_{B_{k,n}}  A(x,u_n)|\nabla u_n|^{p-2} \nabla u_n \cdot \nabla v dx\right|\le \eps_{k,n}.
\end{equation}
Thus, \eqref{p2} follows from \eqref{eq1} and \eqref{eq3} as all $\eps_{k,n}$ are independent
of $v$.
\smallskip

\noindent
{\sl Step 2.}
We note that \eqref{c2}--\eqref{c4} imply that, if $n\to+\infty$,
\begin{eqnarray}
&&T_k u_n \rightharpoonup u\ \hbox{weakly in $W^{1,p}_r(\R^N)$,}
\label{cc2}\\
&&T_ku_n \to u\ \hbox{strongly in $L^l(\R^N)$ for each $l \in ]p,p^*[$,}
\label{cc3}\\
&&T_ku_n \to u\ \hbox{a.e. in $\R^N$.}
\label{cc4}
\end{eqnarray}
Now, as in \cite{AB1}, let us consider the real map 
\[
\psi: t \in \R \mapsto \psi(t) = t \e^{\eta t^2}\in \R,
\] 
where $\eta > (\frac\beta{2\alpha})^2$ will be fixed once $\alpha$, $\beta > 0$ are chosen in a suitable way later on. By
definition,
\begin{equation}\label{psi1}
\alpha \psi'(t) - \beta |\psi(t)| > \frac\alpha 2\qquad \hbox{for all $t \in \R$.}
\end{equation}
If we define $v_{k,n} = T_ku_n - u$, from the choice of $k$ we have that $|v_{k,n}|_\infty \le 2k$ for all $n \in \N$,
hence
\begin{equation}\label{psi2}
|\psi(v_{k,n})| \le \psi(2k),\quad 0<\psi'(v_{k,n}) \le \psi'(2k)
\qquad\hbox{a.e. in $\R^N$ for all $n\in \N$,}
\end{equation}
while from \eqref{cc4} it follows that
\begin{equation}\label{psi3}
\psi(v_{k,n}) \to 0, \quad
\psi'(v_{k,n}) \to 1 \qquad\hbox{a.e. in $\R^N$ if $n\to +\infty$.}
\end{equation}
Moreover, we note that
\[
|\psi(v_{k,n})|	\ \le\ |v_{k,n}| \e^{4 k^2 \eta}
\qquad\hbox{a.e. in $\R^N$ for all $n\in \N$,}
\]
thus, direct computations imply that
$(\|\psi(v_{k,n})\|_X)_n$ is bounded, and so from \eqref{psi3},
up to subsequences, it is 
\begin{equation}\label{psi5}
\psi(v_{k,n}) \rightharpoonup 0 \quad \hbox{weakly in $W^{1,p}_r(\R^N)$,}
\end{equation}
while from \eqref{p2} it follows that
\[
\langle d\J(T_ku_n),\psi(v_{k,n})\rangle \to 0 \quad \hbox{as $n\to +\infty$,}
\]
where
\[
\begin{split}
\langle d\J(T_ku_n),\psi(v_{k,n})\rangle\  =\ & \int_{\R^N \setminus B_{k,n}} A(x,u_n) \psi'(v_{k,n})
|\nabla u_n|^{p-2}\nabla u_n \cdot \nabla v_{k,n} dx\\
& + \frac1p \int_{\R^N \setminus B_{k,n}} A_t(x,u_n) \psi(v_{k,n}) |\nabla u_n|^{p}dx
 + \int_{\R^N \setminus B_{k,n}} |u_n|^{p-2}u_n \psi(v_{k,n}) dx\\
& + k^{p-1} \int_{B_{k,n}} \frac{u_n}{|u_n|} \psi(v_{k,n}) dx
- \int_{\R^N} g(x,T_ku_n) \psi(v_{k,n}) dx.
\end{split}
\]
From \eqref{lim6} and \eqref{psi2} we have 
\[
\lim_{n\to+\infty} \int_{B_{k,n}} \frac{u_n}{|u_n|} \psi(v_{k,n}) dx \ =\ 0,
\]
while, from Lemma \ref{tecnico2} with $w_n=T_ku_n$ and $v_n = \psi(v_{k,n})$,
it follows that
\[
\lim_{n\to+\infty} \int_{\R^N} g(x,T_ku_n) \psi(v_{k,n}) dx\ =\ 0.
\]
Hence, summing up, we obtain that 
\[
\begin{split}
\eps_{k,n}\  \ge \ & \int_{\R^N \setminus B_{k,n}} A(x,u_n) \psi'(v_{k,n})
|\nabla u_n|^{p-2}\nabla u_n \cdot \nabla v_{k,n} dx\\
& + \frac1p \int_{\R^N \setminus B_{k,n}} A_t(x,u_n) \psi(v_{k,n}) |\nabla u_n|^{p}dx
 + \int_{\R^N \setminus B_{k,n}} |u_n|^{p-2}u_n \psi(v_{k,n}) dx.
\end{split}
\]
Now, we note that, since $|u_n(x)| \le k$ for all $n\in \N$ a.e. in $\R^N \setminus B_{k,n}$,
from $(H_1)$ and $(H_2)$ a constant $c_7 > 0$, which depends only on $k$, exists such that
\[
\begin{split}
\left|\int_{\R^N \setminus B_{k,n}} A_t(x,u_n) \psi(v_{k,n}) |\nabla u_n|^{p}dx\right|\ &\le\ 
\frac{c_7}{\alpha_0} \int_{\R^N \setminus B_{k,n}} A(x,u_n) |\psi(v_{k,n})| |\nabla u_n|^{p}dx\\
&=\ \frac{c_7}{\alpha_0} \int_{\R^N \setminus B_{k,n}} A(x,u_n) |\psi(v_{k,n})| |\nabla u_n|^{p-2}
\nabla u_n\cdot \nabla v_{k,n} dx \\
&\quad +
\frac{c_7}{\alpha_0} \int_{\R^N \setminus B_{k,n}} A(x,u_n) |\psi(v_{k,n})| |\nabla u_n|^{p-2}
\nabla u_n\cdot \nabla u dx,
\end{split}
\]
where, the boundedness of $(u_n)_n$ in $W^{1,p}_r(\R^N)$, $(H_1)$, H\"older inequality, \eqref{psi2},
\eqref{psi3} and the Lebesgue Dominated Convergence Theorem imply
\[ 
\begin{split}
0\ &\le\ \left|\int_{\R^N \setminus B_{k,n}} A(x,u_n) |\psi(v_{k,n})| |\nabla u_n|^{p-2}\nabla u_n\cdot \nabla u dx\right|\\
&\le\ c_8 \left(\int_{\R^N \setminus B_{k,n}} |\psi(v_{k,n})|^p |\nabla u|^{p} dx\right)^{\frac1p}
\left(\int_{\R^N \setminus B_{k,n}} |\nabla u_n|^{p} dx\right)^{\frac{p-1}{p}}\ \to\ 0.
\end{split}
\]
Thus, setting
\[
h_{k,n}(x) = \psi'(v_{k,n})
-\ \frac{c_7}{p\alpha_0} |\psi(v_{k,n})|,
\]
and choosing, in the definition of function $\psi$, constants $\alpha = 1$ and $\beta = \frac{c_7}{p\alpha_0}$,
from \eqref{psi1} it is
\begin{equation}\label{psi6}
h_{k,n}(x) \ > \ \frac12 \quad \hbox{a.e. in $\R^N$.}
\end{equation}
Moreover, we have that
\[
\begin{split}
\eps_{k,n}\  &\ge \ \int_{\R^N \setminus B_{k,n}} h_{k,n} A(x,u_n) |\nabla u_n|^{p-2}\nabla u_n \cdot \nabla v_{k,n} dx
+ \int_{\R^N \setminus B_{k,n}} |u_n|^{p-2}u_n \psi(v_{k,n}) dx\\
&=\  \int_{\R^N \setminus B_{k,n}} A(x,u) |\nabla u|^{p-2}\nabla u \cdot \nabla v_{k,n} dx\\
&\quad + \int_{\R^N \setminus B_{k,n}} \left(h_{k,n} A(x,u_n) - A(x,u)\right)|\nabla u|^{p-2}\nabla u
 \cdot \nabla v_{k,n} dx\\
&\quad + \int_{\R^N \setminus B_{k,n}} h_{k,n} A(x,u_n) 
(|\nabla u_n|^{p-2}\nabla u_n - |\nabla u|^{p-2}\nabla u)
\cdot \nabla v_{k,n} dx\\
&\quad + \int_{\R^N \setminus B_{k,n}} (|u_n|^{p-2}u_n - |u|^{p-2}u) \psi(v_{k,n}) dx + 
\int_{\R^N \setminus B_{k,n}} |u|^{p-2}u \psi(v_{k,n}) dx,
\end{split}
\]
with
\[
\lim_{n\to+\infty}\int_{\R^N \setminus B_{k,n}} A(x,u) |\nabla u|^{p-2}\nabla u \cdot \nabla v_{k,n} dx = 0,\qquad
\lim_{n\to+\infty} \int_{\R^N \setminus B_{k,n}} |u|^{p-2}u \psi(v_{k,n}) dx = 0
\]
from \eqref{cc2}, respectively \eqref{psi5}.
On the other hand, we note that $(H_0)$, \eqref{c4} and \eqref{psi3} imply that
$h_{k,n} A(x,u_n) - A(x,u) \to 0$ a.e. in $\R^N$; hence, since $(\|v_{k,n}\|_W)_n$ is bounded,
from H\"older inequality and Lebesgue Dominated Convergence Theorem it follows that
\[
\begin{split}
&\left|\int_{\R^N \setminus B_{k,n}} \left(h_{k,n} A(x,u_n) - A(x,u)\right)|\nabla u|^{p-2}\nabla u
 \cdot \nabla v_{k,n} dx\right| \\
&\quad \le\ 
\left(\int_{\R^N \setminus B_{k,n}} |h_{k,n} A(x,u_n) - A(x,u)|^{\frac{p}{p-1}}|\nabla u|^{p}\right)^{\frac1p}
\|v_{k,n}\|_W \ \to \ 0 .
\end{split}
\]
Thus, summing up, for the strong convexity of the power function with exponent $p > 1$, \eqref{psi6}, $(H_2)$ and
$\e^{\eta v_{k,n}^2} \ge 1$ we obtain
\[
\begin{split}
\eps_{k,n}\  \ge \ \frac{\alpha_0}{2} &\int_{\R^N \setminus B_{k,n}}  
(|\nabla u_n|^{p-2}\nabla u_n - |\nabla u|^{p-2}\nabla u) \cdot \nabla v_{k,n} dx\\
& + \int_{\R^N \setminus B_{k,n}} (|u_n|^{p-2}u_n - |u|^{p-2}u) v_{k,n} dx \ge 0, 
\end{split}
\]
which implies
\[
\int_{\R^N \setminus B_{k,n}}  
(|\nabla u_n|^{p-2}\nabla u_n - |\nabla u|^{p-2}\nabla u) \cdot \nabla v_{k,n} dx
+ \int_{\R^N \setminus B_{k,n}} (|u_n|^{p-2}u_n - |u|^{p-2}u) v_{k,n} dx \to 0,
\]
or better, from \eqref{cc2},
\begin{equation}\label{lim10}
\lim_{n\to+\infty}
\left(\int_{\R^N \setminus B_{k,n}}  
|\nabla u_n|^{p-2}\nabla u_n \cdot \nabla v_{k,n} dx
+ \int_{\R^N \setminus B_{k,n}} |u_n|^{p-2}u_n v_{k,n} dx\right) = 0.
\end{equation}
Since from \eqref{lim6} we have that 
\[
\lim_{n\to+\infty} \int_{B_{k,n}} |T_ku_n|^{p-2}T_ku_n\ v_{k,n} dx = 0
\]
and 
\[
\int_{\R^N \setminus B_{k,n}}  
|\nabla u_n|^{p-2}\nabla u_n \cdot \nabla v_{k,n} dx
= \int_{\R^N}  
|\nabla T_k u_n|^{p-2}\nabla T_ku_n \cdot \nabla v_{k,n} dx,
\]
\eqref{lim10} becomes
\begin{equation}\label{lim14}
\lim_{n\to+\infty}
\left(\int_{\R^N}  
|\nabla T_ku_n|^{p-2}\nabla T_ku_n \cdot \nabla v_{k,n} dx
+ \int_{\R^N} |T_ku_n|^{p-2}T_ku_n v_{k,n} dx\right) = 0.
\end{equation}
Now, we define the operator
\[
{\cal L}_p : u \in W^{1,p}_r(\R^N) \mapsto {\cal L}_p u \in (W_r^{1,p}(\R^N))'
\]
as 
\[
\langle{\cal L}_p u,v\rangle \ =\ 
\int_{\R^N} |\nabla u|^{p-2}\nabla u \cdot \nabla v dx 
+ \int_{\R^N} |u|^{p-2}u v dx,\quad v \in W^{1,p}_r(\R^N).
\]
Since $\langle{\cal L}_p u,u\rangle = \|u\|_W^p$ and the H\"older inequalities for integrals
and sums imply that 
\[
|\langle{\cal L}_p u,v\rangle| \le \|u\|_W^{p-1} \|v\|_W,
\]
from \eqref{cc2}, \eqref{lim14} and \cite[Proposition 1.3]{PAO}
it follows that \eqref{eq4} holds.\\
At last, since $\|u_n - u\|_{W} \le \|T_ku_n - u\|_{W}+ \|R_ku_n\|_{W}$,
from \eqref{eq4} and \eqref{b112} it follows that  
$\|u_n - u\|_{W} \to 0$.
\smallskip

\noindent
{\sl Step 3.} As from definition we have $|T_ku_n|_\infty \le k$ for all $n \in \N$, 
the proof follows from \eqref{eq4}, \eqref{cc4}, Proposition \ref{smooth1} and \eqref{pp}, \eqref{p2}. 
\end{proof}

\begin{proof}[Proof of Theorem \ref{mainthm}]
From Remark \ref{smooth2}, looking for weak bounded radial solutions of \eqref{euler}
is equivalent to finding critical points of the $C^1$ functional 
$\J$, defined in \eqref{funct}, restricted to the Banach space $X_r$ as in \eqref{spacer}.
Moreover, by definition, it is $\J(0) = 0$ and Proposition \ref{wCPS} implies
that $\J$ satisfies the weak Cerami--Palais--Smale condition in $\R$.\\
Now, from $(H_2)$, \eqref{altoGeps} with $\eps = \frac{\alpha_0}{2}$, 
\eqref{Sob1} and direct computations it follows that
\[
\J(u)\ \ge\ \frac{\alpha_0}{2p}\ \|u\|_W^p\ -\ \frac{a_\eps}{q}\ \sigma_q^q\ \|u\|_W^q
\qquad \hbox{for any $u \in X_r$;}
\]
hence, $r$, $\varrho > 0$ exist such that
\[
u \in X_r, \; \|u\|_W = r\quad \then\quad J(u) \ge \varrho.
\]
On the other hand, fixing any $\bar{u} \in X_r$ such that
$\meas(C_1) > 0$, with $C_1 = \{x \in \R^N: |\bar{u}(x)| > 1\}$,
for any $t > 1$ from \eqref{alto}, Remark \ref{bassoG} with $\eps=1$, $(G_3)$ and direct computations
we have
\[
\begin{split}
\J(t\bar{u})\ &=\ \frac{t^p}{p}\  \int_{\R^N} A(x,t\bar{u})|\nabla \bar{u}|^p dx + 
\frac{t^p}{p}\ \int_{\R^N} |\bar{u}|^p dx - \int_{\R^N} G(x,t\bar{u}) dx\\
&\le\ c_1 t^p \|\bar{u}\|_X^p+ c_2 t^{\mu - \alpha_2}\|\bar{u}\|_X^{\mu - \alpha_2}
- \int_{C_1} G(x,t\bar{u}) dx - \int_{\R^N\setminus C_1} G(x,t\bar{u}) dx\\
&\le\ c_1 t^p \|\bar{u}\|_X^p+ c_2 t^{\mu - \alpha_2}\|\bar{u}\|_X^{\mu - \alpha_2}
- t^\mu \int_{C_1} \eta_1(x) |\bar{u}|^\mu dx. 
\end{split}
\]
Then, $p < \mu$ implies that $\J(t\bar{u}) \to -\infty$ as $t \to +\infty$; hence,
$e \in X_r$ exists such that $\|e\|_W > r$ and $\J(e) < \varrho$.
So, the conclusion follows from Theorem \ref{mountainpass}.
\end{proof}


\end{document}